\newtheorem{theorem}{Theorem}
\newtheorem{corollary}[theorem]{Corollary}
\newtheorem{example}[theorem]{Example}
\newtheorem{lemma}[theorem]{Lemma}
\newtheorem{proposition}[theorem]{Proposition}
\newtheorem{remark}[theorem]{Remark}
\newenvironment{proof}[1][Proof]{\textbf{#1.} }{\ \rule{0.5em}{0.5em}}
\begin{document}

\author{S.Barannikov \\
ENS(Paris). Universit\'{e} Paris Diderot - Paris 7.}
\title{Matrix De Rham complex and quantum $A-$infinity algebras.}
\date{}
\maketitle

\begin{abstract}
I establish the relation of the non-commutative BV-formalism with
super-invariant matrix integration. In particular, the non-commutative
BV-equation, defining the quantum A-infinity-algebras, introduced in \cite%
{B1}, is represented via de Rham differential acting on the supermatrix
spaces related with Bernstein-Leites simple associative algebras with odd
trace $q(N)$, and with $gl(N|N)$. I also show that the Lagrangians of the $EA
$-matrix integrals from \cite{B2} are represented by equivariantly closed
differential forms.
\end{abstract}

The \footnotetext[1]{%
submitted for publication on 20/01/2010, preprint HAL-00378776, (04/2009).
Published in Lett. in Math.Phys. 10.1007/s11005-013-0677-7
\par
\textbf{Keywords: }Cyclic homology, non-commutative geometry, matrix
integrals, super Lie algebras, homotopy associative algebras,
Batalin-Vilkovisky formalism, mirror symmetry.
\par
\textbf{MSC2010: }05A05,14N35,53D45,53D37
\par
{}} relation of the cyclic differential with $gl(N)-$invariant tensors on
matrix spaces $gl(N)\otimes \Pi V$ was one of the origins of the cyclic
homology \cite{FT},\cite{L}. Motivated by the ideas of supersymmetry, in
particular the importance of the $\emph{odd}$ symplectic and
Batalin--Vilkovisky structures for construction of Lagrangians of physical
theories, I propose to include into consideration other series of simple
associative \emph{super} algebras, such as Bernstein-Leites algebra $q(N)$ 
\cite{BL}, which is the odd analogue of the general linear matrix algebra.

I start by establishing the relation of the non-commutative BV-formalism,
introduced in \cite{B1}, with invariant super-matrix integration. In
particular I express the non-commutative Batalin-Vilkovisky equation,
defining the quantum $A_{\infty }$-algebras, via the de Rham differential
acting on the super-matrix spaces 
\begin{equation*}
q(N)\otimes \Pi V
\end{equation*}%
constructed from the Bernstein-Leites algebra in the even scalar product
case, and on the super-matrix spaces 
\begin{equation*}
gl(N|N)\otimes \Pi V
\end{equation*}%
in the odd scalar product case. It implies, in particular, that the
cohomology of the Batalin-Vilkovisky differential from loc.cit. are zero. As
another immediate consequence I prove that the Lagrangians of the
supersymmetric matrix integrals, introduced in \cite{B2}, represent closed%
\emph{\ }$gl(N)$-\emph{equivariant} differential forms. In the even scalar
product case the Lagrangian is 
\begin{equation}
\exp \frac{1}{\hbar }\left( -\frac{1}{2}\left\langle [\Xi ,X],X\right\rangle
+S_{q}(X)\right)   \label{lagrr1}
\end{equation}%
where $\Xi \in q(N)_{1}$ is an odd element of the Lie algebra $q(N)$, and 
\begin{equation*}
S_{q}(X)=\sum_{\alpha ,g,i}\hbar ^{2g-1+i}c_{g,\alpha _{1}\ldots \alpha
_{i}}otr(X^{\alpha _{1}})\ldots otr(X^{\alpha _{i}})
\end{equation*}%
is $q(N)-$invariant function associated by the invariant theory with a sum
of \emph{exterior} products of cyclic cochains, representing a solution to
the noncommutative BV-equation, and $otr(Y)$ is the odd trace on $q(N)$, the
odd analogue of the matrix algebra. In the odd scalar product case the
Lagrangian is 
\begin{equation}
\exp \frac{1}{\hbar }\left( -\frac{1}{2}\left\langle [\Xi ,X],X\right\rangle
+S_{gl}(X)\right)   \label{lagrr2}
\end{equation}%
where $\Xi \in gl(N|N)_{1}$ is an odd element, and 
\begin{equation*}
S_{gl}(X)=\sum_{\alpha ,g,i}\hbar ^{2g-1+i}c_{g,\alpha _{1}\ldots \alpha
_{i}}tr(X^{\alpha _{1}})\ldots tr(X^{\alpha _{i}})
\end{equation*}%
is $gl(N|N)-$invariant function associated by the invariant theory with a
sum of \emph{symmetric} products of cyclic cochains, representing a solution
to the noncommutative BV-equation in the odd scalar product case, and the
trce is the supertrace on $gl(N|N)$.

As another consequence of the relation of non-commutative BV-formalism with
super-invariant matrix integration, I prove an analogue of Morita
equivalence for non-commutative BV-formalism and construct from solutions of
the non-commutative BV-equations corresponding to vector space with scalar
product $(V,l)$, the new solutions corresponding to the vector spaces $%
(V\otimes gl(k|\widetilde{k}),l\otimes tr)$ and $(V\otimes q(N),l\otimes
otr) $.

The integrals of (\ref{lagrr1}), and their odd dimensional counterparts (\ref%
{lagrr2}), are the higher dimensional generalisations of the matrix Airy
function. The case of the matrix Airy function corresponds to the simplest,
zero-dimensional, solution, associated with the algebra generated by the
identity element: $e\cdot e=e$.

The results of the current paper suggest that, in a sense, any topological
matrix Lagrangian, with symmetry algebras given by the super Lie algebras $%
q(N)$ or $gl(N|N)$, comes from the construction from \cite{B2}.

The study of integrals of (\ref{lagrr1}),(\ref{lagrr2}) can be viewed as
generalisation for families depending on non-commuting parameters (i.e.
points over non-commutative super algebras) of the study of the $A_{\infty
}- $periods from \cite{B0}: 
\begin{equation*}
\int (\exp \frac{1}{\hbar }\gamma _{A_{\infty }})\omega ~
\end{equation*}%
where $\gamma _{A_{\infty }}$ represents the $A_{\infty }$- structure
depending on commuting parameters. For such families of integrals there is
an analogue, in the non-commutative setting, of the theory of Hodge
structures and their variations, which involves the notion of \emph{%
semi-infinite} Hodge structures, see loc.cit. The developement of an
analogue of the theory of variations of Hodge structures in the setting of
families depending on noncommuting parameters is an interesting open problem
to which I plan to return in the future publications.

Here is the short description of sections of the paper. In the first two
sections the $gl(N|N)-$invariant geometry of the odd symplectic affine space 
$gl(N|N)\otimes \Pi V$ is studied and the noncommutative Batalin-Vilkovisky
differential on $Symm(\oplus _{j=0}^{\infty }(V^{\otimes j})^{\mathbb{Z}/j%
\mathbb{Z}})$ from \cite{B1} is identified with $gl(N|N)-$invariant
BV-differential on this affine space. In the next section I prove analogous
result in the even scalar product case, corresponding to the noncommutative
BV-differential on the \emph{exterior} product of cyclic chains $Symm(\oplus
_{j=0}^{\infty }\Pi ((\Pi V)^{\otimes j})^{\mathbb{Z}/j\mathbb{Z}})$ and the 
$q(N)-$invariant BV-differential on the odd symplectic affine space $%
q(N)\otimes \Pi V$. The main technical result used here is the analogue of
the invariant theory for the algebra $q(N)$ from \cite{S}. By the standard
odd Fourier transform the BV-differentials on affine matrix spaces are
identified with de Rham differentials on the similar affine matrix spaces.
This implies the triviality of the cohomology of the noncommutative
Batalin-Vilkovisky differentials on $Symm(\oplus _{j=0}^{\infty }(V^{\otimes
j})^{\mathbb{Z}/j\mathbb{Z}})$ and $Symm(\oplus _{j=0}^{\infty }\Pi ((\Pi
V)^{\otimes j})^{\mathbb{Z}/j\mathbb{Z}})$. I also prove a kind of Morita
equivalence, which, given solutions to the noncommutative BV equation
corresponding to a space $V$ , allows one to construct solutions
corresponding to the spaces $V\otimes gl(k|l)$ and $V\otimes q(N)$. In the
section \ref{secEqDif} the hamiltonians for the adjoint actions of super Lie
algebras $q(N)$ and $gl(N|N)$ are written and it is shown that their sum
with BV-differential corresponds under the odd Fourier transform, to the
equivariant cohomology differential. The section \ref{secNCaksz} is devoted
to noncommutative \emph{super}-equivariant AKSZ-type symplectic $\sigma -$%
model interpretation of the Lagrangians from \cite{B2}. It is the invariance
with respect to the \emph{supergroups} $GQ(N)$ and $GL(N|N)$ that plays the
essentual role in this interpretation.

I would like to thank the referees for useful comments.

\emph{Notations}. I work in the tensor category of $\mathbb{Z}/2\mathbb{Z}$%
-graded vector spaces, over an algebraically closed field $k$, $char(k)=0$.
Let $V=V_{0}\oplus V_{1}$ be a $\mathbb{Z}/2\mathbb{Z}$-graded vector space.
I denote by $\overline{\alpha }$ the parity of an element $\alpha $ and by $%
\Pi V$ the super vector space with inversed parity. For a finite group $G$
acting on a $\mathbb{Z}/2\mathbb{Z}$-graded vector space $U$, I denote via $%
U^{G}$ the space of invariants with respect to the action of $G$ and by $%
U_{G}$ the space of coinvariants $U_{G}=U/\{gv-v|v\in V,g\in G\}$. If $G$ is
finite then the averaging $(v)\rightarrow 1/|G|\sum_{g\in G}gv$ give a
canonical isomorphism $U_{G}\simeq U^{G}$. Element $(a_{1}\otimes
a_{2}\otimes \ldots \otimes a_{n})$ of $A^{\otimes n}$ is denoted by $%
(a_{1},a_{2},\ldots ,a_{n})$. Cyclic words, i.e. elements of the subspace $%
(V^{\otimes n})^{\mathbb{Z}/n\mathbb{Z}}$ are denoted via $(a_{1}\ldots
a_{n})^{c}$. The symbol $\delta _{\alpha }^{\beta }$ denotes the Kronecker
delta tensor: $\delta _{\alpha }^{\beta }=1$ for $\alpha =\beta $ and zero
otherwise. I denote by $tr$ the super trace linear functional on $End(U)$, $%
tr(U)=\sum_{a}(-1)^{\overline{a}}U_{a}^{a}$. The isomorphism of the tensor
category of $\mathbb{Z}/2\mathbb{Z}$-graded vector spaces, $X\otimes Y$ $%
\simeq Y\otimes X$, is realized via $(x,y)\rightarrow (-1)^{\overline{x}%
\overline{y}}(y,x)$. Throughout the paper, unless it is stated explicitly
otherwise, $(-1)^{\epsilon _{K}}$ in the formulas denotes the standard
Koszul sign, which can be worked out by counting $(-1)^{\overline{a}%
\overline{b}}$every time the objects $a$ and $b$ are interchanged to obtain
the given formula.

\section{The vector space $F$.}

\bigskip I start with the case of the \emph{odd }symmetric scalar product on 
$\mathbb{Z}/2\mathbb{Z}$-graded vector space $V$. In this case I put $%
F=\oplus _{n=0}^{\infty }F_{n}$ where%
\begin{equation*}
F_{n}=(V^{\otimes n}\otimes k[\mathbb{S}_{n}])^{\mathbb{S}_{n}}.
\end{equation*}%
Here, $k\mathbf{[}\mathbb{S}_{n}]$ is the group algebra of the symmetric
group $\mathbb{S}_{n}$, and $\mathbb{S}_{n}$ acts on $k[\mathbb{S}_{n}]$ by
conjugation.

The odd symmetric scalar product on $V$ defines a differential on $F$, \cite%
{B1}, which equips $F$ with the Batalin-Vilkovisky algebra structure. My aim
below is to use the invariant theory approach to cyclic homology (\cite{FT},%
\cite{L} and references therein) in order to represent this differential on $%
F$ via $gl(N|N)-$invariant geometry on the affine spaces $gl(N|N)\otimes \Pi
V$ . First, I start by interpreting $F$ as the space of $gl(N|\widetilde{N})$%
-invariant symmetric tensors on $gl(N|\widetilde{N})\otimes \Pi V$.

Let $U$ be a $\mathbb{Z}/2\mathbb{Z}$-graded vector space. There is the
natural left group $\mathbb{S}_{n}$-action on $U^{\otimes n}$ via 
\begin{equation*}
\sigma \in \mathbb{S}_{n},\,\,\,\sigma :(u_{1},\ldots ,u_{n})\rightarrow
(-1)^{\epsilon _{K}}(u_{\sigma ^{-1}(1)},\ldots ,u_{\sigma ^{-1}(n)})
\end{equation*}%
where $(-1)^{\epsilon _{K}}$ is the standard Koszul sign, which is equal in
this case to 
\begin{equation*}
\sum_{i<j,\sigma (i)>\sigma (j)}(-1)^{\overline{u}_{i}\overline{u}_{j}}
\end{equation*}%
This gives $k-$algebra morphism $\mu :k[\mathbb{S}_{n}]\rightarrow
End_{k}(U^{\otimes n})$. The group $GL(U)$ of automorphisms of $U$ acts
diagonally on $U^{\otimes n}$ and the image of $k[\mathbb{S}_{n}]$ is in the
invariant subspace of the adjoint action of $GL(U)$ on $End_{k}(U^{\otimes
n})$. If $U$ is a vector space with $\dim _{k}U_{0}\geq n$ then the $k-$%
algebra morphism $\mu $ is an isomorphism: 
\begin{equation}
\mu :k[\mathbb{S}_{n}]\simeq (End_{k}(U^{\otimes n}))^{GL(U)}  \label{mukasn}
\end{equation}%
according to the invariant theory, see for example (\cite{L}, 9.1.4),
arguments from which are easily adopted to work in the case of the $\mathbb{Z%
}/2\mathbb{Z}$-graded vector space.

\begin{proposition}
\bigskip \label{propFn}The vector space $F_{n}$ is canonically identified
via the map $\mu $ with $GL(U)$-invariant subspace of $n-$symmetric powers
of the vector space $End_{k}(U)\otimes V$: 
\begin{equation}
F_{n}\simeq \left( S^{n}(End_{k}(U)\otimes V)\right) ^{GL(U)},  \label{glinv}
\end{equation}%
where $U$ is a $\mathbb{Z}/2\mathbb{Z}$-graded vector space with $\dim _{k}$ 
$U_{0}\geq n$.
\end{proposition}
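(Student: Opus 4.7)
The plan is to unwind the super-symmetric power on the right-hand side, rearrange tensor factors via the Koszul braiding to separate $End_{k}(U)$ from $V$, and then collapse the $End_{k}(U)$-part into $k[\mathbb{S}_{n}]$ using the invariant theory isomorphism (\ref{mukasn}).

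First, since $\mathrm{char}(k)=0$, I write $S^{n}(End_{k}(U)\otimes V)\simeq \bigl((End_{k}(U)\otimes V)^{\otimes n}\bigr)^{\mathbb{S}_{n}}$ with respect to the super-signed diagonal permutation action. The symmetric braiding in the tensor category of $\mathbb{Z}/2\mathbb{Z}$-graded vector spaces provides an $\mathbb{S}_{n}$-equivariant isomorphism $(End_{k}(U)\otimes V)^{\otimes n}\simeq End_{k}(U)^{\otimes n}\otimes V^{\otimes n}$, where on the target $\mathbb{S}_{n}$ acts diagonally by super-signed permutation on each of the two tensor factors. Under the further identification $End_{k}(U)^{\otimes n}\simeq End_{k}(U^{\otimes n})$ the $\mathbb{S}_{n}$-action on the $End_{k}$-part becomes conjugation by the super-signed permutation representation of $\mathbb{S}_{n}$ on $U^{\otimes n}$.

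Next, since $GL(U)$ acts trivially on $V^{\otimes n}$ and its action commutes with that of $\mathbb{S}_{n}$, I interchange the order of taking invariants to obtain $\bigl(S^{n}(End_{k}(U)\otimes V)\bigr)^{GL(U)}\simeq \bigl((End_{k}(U^{\otimes n}))^{GL(U)}\otimes V^{\otimes n}\bigr)^{\mathbb{S}_{n}}$. The hypothesis $\dim_{k}U_{0}\geq n$ lets me invoke (\ref{mukasn}) to replace $(End_{k}(U^{\otimes n}))^{GL(U)}$ by $k[\mathbb{S}_{n}]$ via $\mu$. What then remains is to observe that $\mu$ intertwines conjugation of endomorphisms by the permutation action on $U^{\otimes n}$ with conjugation of group elements in $k[\mathbb{S}_{n}]$; this is immediate from $\mu$ being a $k$-algebra morphism and $\mu(\sigma)$ being the super-signed permutation operator. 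The right-hand side becomes exactly $F_{n}=(V^{\otimes n}\otimes k[\mathbb{S}_{n}])^{\mathbb{S}_{n}}$.

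The main obstacle I anticipate is bookkeeping the Koszul signs consistently throughout: one must verify that the braiding isomorphism separating $End_{k}(U)^{\otimes n}$ from $V^{\otimes n}$ intertwines the two super-signed permutation actions, and that the conjugation actions on $End_{k}(U^{\otimes n})$ and on $k[\mathbb{S}_{n}]$ match under $\mu$ with the correct signs inherited from the super structure on $U$. These are routine sign chases once the conventions of the tensor category are fixed; no new ingredient is required beyond the invariant theory input (\ref{mukasn}).
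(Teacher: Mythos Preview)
Your proof is correct and follows essentially the same approach as the paper: the same chain of isomorphisms using $S^{n}(W)\simeq (W^{\otimes n})^{\mathbb{S}_{n}}$, the braiding separation $(End_{k}(U)\otimes V)^{\otimes n}\simeq End_{k}(U)^{\otimes n}\otimes V^{\otimes n}$, the identification $End_{k}(U)^{\otimes n}\simeq End_{k}(U^{\otimes n})$ turning permutation into conjugation, the commutation of the $GL(U)$- and $\mathbb{S}_{n}$-actions, and finally the invariant theory isomorphism (\ref{mukasn}). The only cosmetic difference is that the paper runs the chain from $F_{n}$ toward $\left(S^{n}(End_{k}(U)\otimes V)\right)^{GL(U)}$ while you run it in the opposite direction.
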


\begin{proof}
The proof is essentially the application of the invariant theory as in the
classical definition of cyclic homology via homology of the general linear
algebra (see \cite{FT},\cite{L}). I have the following sequence of
isomorphisms of $\mathbb{Z}/2\mathbb{Z}$-graded vector spaces:%
\begin{multline*}
(V^{\otimes n}\otimes k[\mathbb{S}_{n}])^{\mathbb{S}_{n}}\simeq (V^{\otimes
n}\otimes (End_{k}(U^{\otimes n}))^{GL(U)})^{\mathbb{S}_{n}}\simeq  \\
\simeq (V^{\otimes n}\otimes (End_{k}(U)^{\otimes n})^{GL(U)})^{\mathbb{S}%
_{n}}\simeq ((V^{\otimes n}\otimes End_{k}(U)^{\otimes n})^{GL(U)})^{\mathbb{%
S}_{n}}\simeq  \\
\simeq (((V\otimes End_{k}(U))^{\otimes n})^{\mathbb{S}_{n}})^{GL(U)}\simeq
\left( S^{n}(End_{k}(U)\otimes V)\right) ^{GL(U)}.
\end{multline*}%
Here, I used the canonical isomorphism $End_{k}(U)^{\otimes n}\simeq
End_{k}(U^{\otimes n})$, under which the permuting of $n-$tuples of
endomorphisms by $\sigma $ corresponds to the conjugation by $\mu (\sigma )$%
. I also used the fact that $GL(U)-$action and $\mathbb{S}_{n}-$action
mutually commute.
\end{proof}

I shall denote the isomorphism (\ref{glinv}) by $\mu _{F}$. Denote by $%
\{E_{\alpha }^{\beta }\}$, $E_{\alpha }^{\beta }=e_{\alpha }^{\ast }\otimes
e_{\beta }$ the basis of elementary matrices in $End_{k}(U)$ corresponding
to some basis $\{e_{\alpha }\}$ in $U$. Then the map (\ref{mukasn}) is
written as 
\begin{equation*}
\mu :[\sigma ]\rightarrow \sum_{\alpha _{1},\ldots \alpha
_{n}}(-1)^{\epsilon _{K}}E_{\alpha _{1}}^{\alpha _{\sigma ^{-1}(1)}}\otimes
\ldots \otimes E_{\alpha _{n}}^{\alpha _{\sigma ^{-1}(n)}}.
\end{equation*}%
For a set of elements $a_{i}\in V$ , $i\in \{1,\ldots ,n\}$ denote via%
\begin{equation}
A_{i,\alpha }^{\beta }=E_{\alpha }^{\beta }\otimes a_{i},\,~\,\,A_{i,\alpha
}^{\beta }\in End_{k}(U)\otimes V  \label{Ailb}
\end{equation}%
the corresponding set of generators of the symmetric algebra of $%
End_{k}(U)\otimes V$. Denote 
\begin{equation}
Tr(A_{\rho _{1}}\ldots A_{\rho _{r}})=\sum_{\alpha _{1},\ldots \alpha
_{r}}(-1)^{\epsilon _{K}}A_{\rho _{1},\alpha _{1}}^{\alpha _{r}}\cdot
A_{\rho _{2},\alpha _{2}}^{\alpha _{1}}\ldots \cdot A_{\rho _{r},\alpha
_{r}}^{\alpha _{r-1}}  \label{Trt}
\end{equation}%
the symmetric tensor of degree $r$.

\begin{remark}
The notations (\ref{Ailb}), (\ref{Trt}) are justified by the fact that if
one identifes $End_{k}(U)$ with its dual space $\limfunc{Hom}(End_{k}(U),k))$
using the super trace and therefore, identifies $S^{n}(End_{k}(U)\otimes V)$
with polynomial functions of degree $n$ on the vector space $\limfunc{Hom}%
(V,End_{k}(U))$, so that the isomorphism $\mu _{F}$ becomes 
\begin{equation*}
F_{n}\simeq \left( S^{n}(\limfunc{Hom}(\limfunc{Hom}(V,End_{k}(U)),k)\right)
)^{GL(U)}
\end{equation*}%
then $A_{i,\alpha }^{\beta }$ is the linear function on $\limfunc{Hom}%
(V,End_{k}(U))$: $\varphi \rightarrow \varphi (a_{i})_{\alpha }^{\beta }$,
where $\varphi (a_{i})e_{\alpha }=\varphi (a_{i})_{\alpha }^{\beta }e_{\beta
}$, and (\ref{Trt}) is the super trace of the action of product of matrices $%
\varphi (a_{\rho _{1}})\ldots \varphi (a_{\rho _{r}})$ on $U$.
\end{remark}

The space $F_{n}$ is generated linearly by $\mathbb{S}_{n}-$invariant
elements of the form 
\begin{equation}
(a_{\rho _{1}}\ldots a_{\rho _{r}})^{c}\cdot \ldots \cdot (a_{\tau
_{1}}\ldots a_{\tau _{t}})^{c}\otimes \sigma  \label{aaaa}
\end{equation}%
where $\sigma =(\rho _{1}\ldots \rho _{r})\ldots (\tau _{1}\ldots \tau _{t})$
is the cycle decomposition of a permutation $\sigma \in \mathbb{S}_{n}$.

\begin{lemma}
The isomorphism (\ref{glinv}) sends the element (\ref{aaaa}) to the $GL(U)$%
-invariant symmetric tensor%
\begin{equation*}
Tr(A_{\rho _{1}}\ldots A_{\rho _{r}})\cdot \ldots \cdot Tr(A_{\tau
_{1}}\ldots A_{\tau _{t}}).
\end{equation*}
\end{lemma}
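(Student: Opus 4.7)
The plan is to unfold the definitions, apply the explicit formula already given for $\mu$, and check that the resulting expression for $\mu_F$ applied to a cyclic-word element factorizes according to the cycle structure of $\sigma$ into a product of traces.

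First I would start from the pre-invariant representative
\[
(a_1,\ldots,a_n)\otimes\sigma \;\in\; V^{\otimes n}\otimes k[\mathbb{S}_n],
\]
and apply the explicit formula
\[
\mu([\sigma])=\sum_{\alpha_1,\ldots,\alpha_n}(-1)^{\epsilon_K}\,E_{\alpha_1}^{\alpha_{\sigma^{-1}(1)}}\otimes\cdots\otimes E_{\alpha_n}^{\alpha_{\sigma^{-1}(n)}}
\]
in the $k[\mathbb{S}_n]$ slot. Then I would use the canonical shuffle isomorphism $V^{\otimes n}\otimes End_k(U)^{\otimes n}\simeq (V\otimes End_k(U))^{\otimes n}$ (picking up Koszul signs that interchange each $a_i$ past the preceding $E$'s), obtaining
\[
\sum_{\alpha}(-1)^{\epsilon_K}\,A_{1,\alpha_1}^{\alpha_{\sigma^{-1}(1)}}\otimes A_{2,\alpha_2}^{\alpha_{\sigma^{-1}(2)}}\otimes\cdots\otimes A_{n,\alpha_n}^{\alpha_{\sigma^{-1}(n)}},
\]
and finally project onto $S^n(End_k(U)\otimes V)$ by replacing $\otimes$ by the (super)commutative product.

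Second, I would analyze the index structure. Grouping the $n$ factors according to the cycles of $\sigma$, say $(\rho_1\ldots\rho_r)(\tau_1\ldots\tau_t)\cdots$, the Einstein summation over $\alpha$'s splits into one independent index loop per cycle: for the cycle $(\rho_1\ldots\rho_r)$ one has $\sigma^{-1}(\rho_k)=\rho_{k-1}$ (cyclically), so the relevant factor is
\[
\sum_{\alpha_{\rho_1},\ldots,\alpha_{\rho_r}}(-1)^{\epsilon_K}\,A_{\rho_1,\alpha_{\rho_1}}^{\alpha_{\rho_r}}\cdot A_{\rho_2,\alpha_{\rho_2}}^{\alpha_{\rho_1}}\cdots A_{\rho_r,\alpha_{\rho_r}}^{\alpha_{\rho_{r-1}}},
\]
which is exactly $Tr(A_{\rho_1}\ldots A_{\rho_r})$ by definition (\ref{Trt}). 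Because the product lives in the (super)commutative symmetric algebra, regrouping the factors across cycles costs only Koszul signs that precisely cancel those produced by the shuffle in the first step, giving the product of traces.

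Third, I would explain why starting from the $\mathbb{S}_n$-invariant representative $(a_{\rho_1}\ldots a_{\rho_r})^c\cdots(a_{\tau_1}\ldots a_{\tau_t})^c\otimes\sigma$ yields the same image as starting from any of the ordered lifts $(a_1,\ldots,a_n)\otimes\sigma$: the averaging over $\mathbb{S}_n$ used to identify invariants with coinvariants matches the symmetrization used to pass from $(End_k(U)\otimes V)^{\otimes n}$ to $S^n(End_k(U)\otimes V)$, and the result of the previous step is manifestly $\mathbb{S}_n$-invariant (reindexing $\alpha$'s absorbs the symmetric group action).

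The main obstacle is the bookkeeping of Koszul signs: one must check that the signs arising from (i) the shuffle of $V^{\otimes n}$ past $End_k(U)^{\otimes n}$, (ii) the symmetrization/commutation needed to regroup factors cycle-by-cycle in the symmetric algebra, and (iii) the cyclic reordering implicit in writing $Tr(A_{\rho_1}\ldots A_{\rho_r})$ from a linearly ordered product, combine to give exactly the sign conventions built into the cyclic word notation $(a_{\rho_1}\ldots a_{\rho_r})^c$ and into definition (\ref{Trt}). Once this sign calculus is checked cycle by cycle, the statement follows.
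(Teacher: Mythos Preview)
Your proposal is correct and follows essentially the same approach as the paper: the paper's proof simply writes down the formula $\sum_{\alpha_1,\ldots,\alpha_n}(-1)^{\epsilon_K}A_{1,\alpha_1}^{\alpha_{\sigma^{-1}(1)}}\cdots A_{n,\alpha_n}^{\alpha_{\sigma^{-1}(n)}}$ as the image under $\mu_F$ and then says it suffices to rearrange the factors so that pairs with matching upper and lower indices are adjacent. Your steps unpack exactly this argument in more detail, including the cycle-by-cycle regrouping and the Koszul sign bookkeeping that the paper leaves implicit.
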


\begin{proof}
By definition $\mu _{F}$ sends such element to 
\begin{equation}
\sum_{\alpha _{1},\ldots \alpha _{n}}(-1)^{\epsilon _{K}}A_{1,\alpha
_{1}}^{\alpha _{\sigma ^{-1}(1)}}\cdot \ldots \cdot A_{n,\alpha
_{n}}^{\alpha _{\sigma ^{-1}(n)}}.  \label{asigma-1}
\end{equation}%
It is sufficient now to rearrange (\ref{asigma-1}), so that the pairs of
terms with the same repeating upper and lower indexes are placed one after
the other.
\end{proof}

\section{The BV-differential and the bracket.}

Let us now assume that $V$ has an \emph{odd} symmetric non-degenerate scalar
product 
\begin{equation*}
l:V^{\otimes 2}\rightarrow \Pi k,l(x,y)=(-1)^{\overline{x}\,\overline{y}%
}l(y,x)
\end{equation*}%
It follows in particular that the even and odd components of $V$ are of the
same dimension, $\dim _{k}V=(r|r)$.

I assume from now on that $U$ is the $\mathbb{Z}/2\mathbb{Z}$-graded vector
space which also has even and odd components of the same dimension: 
\begin{equation*}
\dim _{k}U=(N|N).
\end{equation*}%
The super-trace functional 
\begin{equation*}
tr(E_{\alpha }^{\widetilde{\alpha }})=(-1)^{\overline{\alpha }}\delta
_{\alpha }^{\widetilde{\alpha }}
\end{equation*}%
defines the natural even scalar product on the vector space $End_{k}(U)$:%
\begin{equation}
tr(E_{\widetilde{\beta }}^{\widetilde{\alpha }},E_{\alpha }^{\beta })=(-1)^{%
\overline{\alpha }}\delta _{\alpha }^{\widetilde{\alpha }}\delta _{%
\widetilde{\beta }}^{\beta }  \label{trEnd}
\end{equation}%
It allows to extend the odd symmetric scalar product $l$ on $V$ to the odd
symmetric non-degenerate scalar product $\widehat{l}$ on $gl(N|N)\otimes V$.
The latter space is therefore, an affine space with constant odd symplectic
structure. Its algebra of symmetric tensors $\oplus _{n=0}^{\infty
}S^{n}(gl(N|N)\otimes V)$ is naturally a Batalin-Vilkovisky algebra. If I
choose a basis $\{a_{\nu }\}$ in $V$, then the Batalin-Vilkovisky operator
acting on the symmetric algebra $\oplus _{n=0}^{\infty }S^{n}(gl(N|N)\otimes
V)$ is written, using the generators $A_{\nu ,\alpha }^{\beta }$, as%
\begin{equation}
\Delta =\sum_{\nu \kappa ,\alpha \beta }(-1)^{\varepsilon _{K}}\frac{l_{\nu
\kappa }}{2}\frac{\partial ^{2}}{\partial A_{\nu ,\alpha }^{\beta }\partial
A_{\kappa ,\beta }^{\alpha }}  \label{deltaAA}
\end{equation}%
where $l_{\nu \kappa }=l(a_{\nu },a_{\kappa })$, and the Koszul sign in this
case $\varepsilon _{K}=\overline{\beta }+\overline{a}_{\nu }(\overline{%
\alpha }+\overline{\beta })$. Similarly I have the standard odd Poisson
bracket corresponding to the affine space with constant odd symplectic
structure:%
\begin{equation*}
\{A_{\nu ,\alpha }^{\beta }A_{\kappa ,\widetilde{\beta }}^{\widetilde{\alpha 
}}\}=(-1)^{\varepsilon _{K}}\delta _{\alpha }^{\widetilde{\alpha }^{\prime
}}\delta _{\widetilde{\beta }}^{\beta }l_{\nu \kappa }
\end{equation*}%
Since \bigskip the scalar product $\widehat{l}$ is $GL(N|N)$-invariant,
therefore, both the second-order odd operator $\Delta $ and the bracket $%
\{\bullet ,\bullet \}$ are $GL(N|N)$-invariant. Therefore, $\Delta $ defines
a differential on the $GL(N|N)-$invariant subspace 
\begin{equation}
\oplus _{n\leq s}S^{n}(gl(N|N)\otimes V)^{GL(N|N)},  \label{snglu}
\end{equation}%
which coincides with $\oplus _{n\leq s}F^{n}$ by proposition \ref{propFn} if 
$\dim _{k}U$ is sufficiently big ($N\geq s$). This also gives a bracket $%
\{\gamma _{1},\gamma _{2}\}\in \oplus _{n\leq s}S^{n}(gl(N|N)\otimes
V)^{GL(N|N)}$ for elements $\gamma _{i}\in $ $S^{n_{i}}(gl(N|N)\otimes
V)^{GL(N|N)}$, $n_{1}+n_{2}\leq s+2$.

The Batalin-Vilkovisky operator acting on $F$ was introduced in \cite{B1} .
It is the combination of \textquotedblright
dissection-gluing\textquotedblright\ operator acting on cycles with
contracting by the tensor of the scalar product. The space $F$ is naturally
identified, by considering the cycle decomposition of permutations, with the
symmetric algebra of the space of cyclic words: 
\begin{equation*}
F=Symm(\oplus _{j=0}^{\infty }(V^{\otimes j})^{\mathbb{Z}/j\mathbb{Z}}).
\end{equation*}%
The second order Batalin-Vilkovisky operator from (\cite{B1},\cite{B2}) is
completely determined by its action on the second symmetric power and it
sends a product of two cyclic words $(a_{\rho _{1}}\ldots a_{\rho
_{r}})^{c}(a_{\tau _{1}}\ldots a_{\tau _{t}})^{c}$ to 
\begin{multline}
\sum_{p,q}(-1)^{\varepsilon _{1,K}}l_{\rho _{p}\tau _{q}}(a_{\rho
_{1}}\ldots a_{\rho _{p-1}}a_{\tau _{q+1}}\ldots a_{\tau _{q-1}}a_{\rho
_{p+1}}\ldots a_{\rho _{r}})^{c}+  \label{deltas} \\
+\sum_{p+1<q}(-1)^{\varepsilon _{2,K}}l_{\rho _{p}\rho _{q}}(a_{\rho
_{1}}\ldots a_{\rho _{p-1}}a_{\rho _{q+1}}\ldots a_{\rho _{r}})^{c}(a_{\rho
_{p+1}}\ldots a_{\rho _{q-1}})^{c}(a_{\tau _{1}}\ldots a_{\tau _{t}})^{c} 
\notag \\
+\sum_{p+1<q}(-1)^{\varepsilon _{3,K}}l_{\tau _{p}\tau _{q}}(a_{\rho
_{1}}\ldots a_{\rho _{r}})^{c}(a_{\tau _{1}}\ldots a_{\tau _{p-1}}a_{\tau
_{q+1}}\ldots a_{\tau _{t}})^{c}(a_{\tau _{p+1}}\ldots a_{\tau _{q-1}})^{c} 
\notag
\end{multline}%
where $\varepsilon _{i,K}$ are the Koszul signs, and $q^{\prime }<q$ denotes
the cyclic order. It follows from \cite{B1} that $\Delta ^{2}=0$.

\begin{theorem}
\label{deltamx}The operator $\Delta $ defined on the $GL(N|N)-$invariant
subspace (\ref{snglu}), with sufficiently big $N\geq s$, coincides with the
BV-differential defined on $\oplus _{n\leq s}F_{n}$ in \cite{B1},\cite{B2}.
\end{theorem}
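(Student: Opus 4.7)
The plan is to use Proposition \ref{propFn} and the preceding lemma to rewrite the combinatorial formula (\ref{deltas}) in the matrix language on $S^{*}(gl(N|N)\otimes V)^{GL(N|N)}$, and then verify by direct computation that the matrix second-order operator (\ref{deltaAA}) produces the same expression. Both operators are second-order differential operators on a polynomial super-commutative algebra, and the standard BV-identity expressing $\Delta(abc)$ in terms of $\Delta(ab)$, $\Delta(bc)$, $\Delta(ac)$ and $\Delta$ on linear elements reduces the problem to matching them on linear generators (single cyclic words) and on products of two cyclic words. Under $\mu_{F}$ such a pair corresponds to a product of traces $Tr(A_{\rho_{1}}\ldots A_{\rho_{r}})\cdot Tr(A_{\tau_{1}}\ldots A_{\tau_{t}})$, so this is what I would compute.

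First I would compute $\partial/\partial A^{\beta}_{\nu,\alpha}\,Tr(A_{\rho_{1}}\ldots A_{\rho_{r}})$. Using cyclicity of the super-trace, this is a sum over positions $p$ with $\rho_{p}=\nu$ of ``broken cycle'' tensors obtained by removing the $p$-th factor and leaving the free upper and lower indices $\beta,\alpha$ in its slot, carrying a Koszul sign from moving the derivation past the preceding factors. Applying the second derivative $\partial/\partial A^{\alpha}_{\kappa,\beta}$ then gives two alternatives. In the \emph{gluing} case it lands in the other trace at a position $q$ with $\tau_{q}=\kappa$; after summing over $\alpha,\beta$ the two broken cycles join into a single cyclic word, and contracting with $l_{\nu\kappa}$ produces the first sum of (\ref{deltas}). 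In the \emph{splitting} case it lands at a position $q\neq p$ of the same trace; the two halves close up into two cyclic words of lengths $r-(q-p)$ and $q-p-1$, reproducing the second sum of (\ref{deltas}). The third sum is obtained symmetrically when the first derivative acts instead on the $\tau$-trace. The factor $\tfrac{1}{2}$ in (\ref{deltaAA}) is absorbed by the symmetry of the double derivative in the unordered pair $\{p,q\}$.

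The main obstacle is the bookkeeping of Koszul signs. I would accumulate the contributions from (i) the $(-1)^{\bar{\alpha}}$ in the super-trace (\ref{Trt}), (ii) moving the derivative past the preceding factors of a trace, (iii) the explicit sign $(-1)^{\bar{\beta}+\bar{a}_{\nu}(\bar{\alpha}+\bar{\beta})}$ from (\ref{deltaAA}), and (iv) the graded symmetry and oddness of $l$, using that $l_{\nu\kappa}\neq 0$ forces $\bar{a}_{\nu}+\bar{a}_{\kappa}=1$. After simplification the total sign must match $\varepsilon_{i,K}$ in (\ref{deltas}). The computation is mechanical but sign-sensitive, and this is where the bulk of the work lies. $GL(N|N)$-invariance of the odd scalar product $\widehat{l}$ ensures that $\Delta$ preserves the invariant subspace (\ref{snglu}); once equality is established on linear and quadratic elements, the order-two BV identity extends it to all of $\oplus_{n\leq s}F_{n}$ for $N\geq s$, completing the identification.
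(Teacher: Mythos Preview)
Your approach is essentially the same as the paper's: reduce to products of two traces via the second-order property, and split the computation of $\Delta\bigl(Tr(A_{\rho_{1}}\ldots A_{\rho_{r}})Tr(A_{\tau_{1}}\ldots A_{\tau_{t}})\bigr)$ into the gluing term and the two splitting terms. However, there is a genuine gap in your splitting case, and it is exactly the point where the hypothesis that the matrix algebra is $gl(N|N)$ (rather than $gl(N)$) is used.

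In the splitting case you say the two halves close up into cyclic words of lengths $r-(q-p)$ and $q-p-1$. When $q=p+1$ (the two derivatives hit \emph{adjacent} letters of the same trace) the second cyclic word has length zero: what actually appears is the factor
\[
Tr(Id)=\sum_{\alpha}(-1)^{\overline{\alpha}}=\dim_{k}U_{0}-\dim_{k}U_{1}.
\]
On the combinatorial side, the second and third sums in (\ref{deltas}) are taken only over $p+1<q$, so there is \emph{no} corresponding term in the BV-differential on $F$. The two operators therefore agree only if this extra matrix contribution vanishes, i.e.\ only because $\dim_{k}U_{0}=\dim_{k}U_{1}$. You never isolate this adjacent case, and nothing in your sign bookkeeping or in the $GL(N|N)$-invariance of $\widehat{l}$ forces it to be zero; invariance holds for any $gl(m|n)$, but the identification with the combinatorial $\Delta$ fails unless $m=n$. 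You should separate the sum over $p<q$ into $p+1<q$ (which matches (\ref{deltas})) and $p+1=q$ (which gives the $Tr(Id)$ term and must be shown to vanish). Once that is inserted, the rest of your outline matches the paper's proof.
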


\begin{proof}
As $\Delta $ is of the second order with respect to the multiplication, it
is sufficient to consider the case of a product of two cyclic words 
\begin{equation*}
(a_{\rho _{1}}\ldots a_{\rho _{r}})^{c}(a_{\tau _{1}}\ldots a_{\tau
_{t}})^{c}
\end{equation*}%
which corresponds under $\mu _{F}$ to 
\begin{equation*}
Tr(A_{\rho _{1}}\ldots A_{\rho _{r}})Tr(A_{\tau _{1}}\ldots A_{\tau _{t}}).
\end{equation*}%
Applying 
\begin{equation*}
\Delta =\sum_{\nu ,\kappa ;\theta ,\beta }(-1)^{\varepsilon _{K}}\frac{%
l_{\nu \kappa }}{2}\frac{\partial ^{2}}{\partial A_{\nu ,\theta }^{\beta
}\partial A_{\kappa ,\beta }^{\theta }}
\end{equation*}%
I get three terms. First, there is the term 
\begin{multline*}
\sum_{p,q;\theta ,\beta }(-1)^{\varepsilon _{K}}l_{\rho _{p}\tau _{q}}(\sum 
_{\substack{ \alpha _{1},\ldots \alpha _{r};  \\ \alpha _{p-1}=\beta ,\alpha
_{p}=\theta }}(-1)^{\varepsilon _{p,K}}A_{\rho _{1},\alpha _{1}}^{\alpha
_{r}}\ldots \widehat{A_{\rho _{p},\theta }^{\beta }}\ldots A_{\rho
_{r},\alpha _{r}}^{\alpha _{r-1}})\cdot \\
\cdot (\sum_{\substack{ \gamma _{1},\ldots \gamma _{t}  \\ \gamma
_{q-1}=\theta ,\gamma _{q}=\beta }}(-1)^{\epsilon _{q,K}}A_{\tau _{1},\gamma
_{1}}^{\gamma _{t}}\ldots \widehat{A_{\tau _{q},\beta }^{\theta }}\ldots
A_{\tau _{t},\gamma _{t}}^{\gamma _{t-1}})
\end{multline*}%
Rewriting this term so that the pairs of terms with repeating lower and
upper indexes follow one after the other I get%
\begin{eqnarray*}
&&\sum_{p,q;\theta ,\beta }(-1)^{\varepsilon _{1,K}}l_{\rho _{p}\tau
_{q}}\cdot \\
&&\cdot (\sum_{\{\alpha \},\{\gamma \}}(-1)^{\varepsilon _{\rho \tau
,K}}A_{\rho _{1},\alpha _{1}}^{\alpha _{r}}\ldots A_{\rho _{p-1},\beta
}^{\alpha _{p-2}}A_{\tau _{q+1},\gamma _{q+1}}^{\beta }\ldots A_{\tau
_{q-1},\theta }^{\gamma _{q-2}}A_{\rho _{p+1},\alpha _{p+1}}^{\theta }\ldots
A_{\rho _{r},\alpha _{r}}^{\alpha _{r-1}})
\end{eqnarray*}%
where $\{\alpha \}=\{\alpha _{1},\ldots \widehat{\alpha }_{p-1},\widehat{%
\alpha }_{p\ldots }\alpha _{r}\}$, $\{\gamma \}=\{\gamma _{1},\ldots 
\widehat{\gamma }_{q-1},\widehat{\gamma }_{q}\ldots \gamma _{t}\}$, which is 
\begin{equation*}
\sum_{p,q}(-1)^{\varepsilon _{1,K}}l_{\rho _{p}\tau _{q}}Tr(A_{\rho
_{1}}\ldots A_{\rho _{p-1}}A_{\tau _{q+1}}\ldots A_{\tau _{q-1}}A_{\rho
_{p+1}}\ldots A_{\rho _{r}}).
\end{equation*}%
This is the term corresponding to the first term in the definition of the
noncommutative Batalin-Vilkovisky operator acting on the product of the two
cyclic words. The matching of signs is verified by using the general
formalism of the Koszul rule, via multiplying the odd elements by associated
odd parameters and verifying that the signs match in the even degree case.
The second term is 
\begin{multline*}
\sum_{p<q;\theta ,\beta }(-1)^{\varepsilon _{K}}l_{\rho _{p}\rho _{q}}(\sum 
_{\substack{ \alpha _{1},\ldots \alpha _{r};  \\ \alpha _{p-1}=\beta ,\alpha
_{p}=\theta  \\ \alpha _{q-1}=\theta ,\alpha _{q}=\beta }}(-1)^{\epsilon
_{p,K}}A_{\rho _{1},\alpha _{1}}^{\alpha _{r}}\ldots \widehat{A_{\rho
_{p},\theta }^{\beta }}\ldots \widehat{A_{\rho _{q},\beta }^{\theta }}\ldots
A_{\rho _{r},\alpha _{r}}^{\alpha _{r-1}})\cdot \\
\cdot (\sum_{\gamma _{1},\ldots \gamma _{t}}(-1)^{\epsilon _{\tau
,K}}A_{\tau _{1},\gamma _{1}}^{\gamma _{t}}\ldots A_{\tau _{t},\gamma
_{t}}^{\gamma _{t-1}}).
\end{multline*}%
Assume first that the erased terms $\widehat{A_{\rho _{p},\theta }^{\beta }}$
and $\widehat{A_{\rho _{q},\beta }^{\theta }}$ are not sitting next to each
other, i.e. $p+1<q$. Then, rewriting this expression so that the pairs of
terms with the same repeating lower and upper indexes follow one after the
other, gives 
\begin{eqnarray*}
&&\sum_{p+1<q}(-1)^{\varepsilon _{2,K}}l_{\rho _{p}\rho _{q}}\cdot \\
&&\cdot Tr(A_{\rho _{1}}\ldots A_{\rho _{p-1}}A_{\rho _{q+1}}\ldots A_{\rho
_{r}})Tr(A_{\rho _{p+1}}\ldots A_{\rho _{q-1}})Tr(A_{\tau _{1}}\ldots
A_{\tau _{t}}).
\end{eqnarray*}%
This correseponds to the second term in the formula for the noncommutative
Batalin-Vilkovisky operator above. However if $p+1=q$ then I get instead 
\begin{equation*}
\sum_{p}(-1)^{\widetilde{\varepsilon }_{K}}l_{\rho _{p}\rho
_{p+1}}Tr(A_{\rho _{1}}\ldots A_{\rho _{p-1}}A_{\rho _{p+2}}\ldots A_{\rho
_{r}})Tr(Id)Tr(A_{\tau _{1}}\ldots A_{\tau _{t}}),
\end{equation*}%
where $Tr(Id)=\sum_{\alpha }(-1)^{\overline{\alpha }}$ , which is equal to
zero precisely because the even and odd parts of $U$ are of the same
dimension 
\begin{equation}
\dim _{k}U_{0}=\dim _{k}U_{1}.  \label{uevenuodd}
\end{equation}
The third term is similar to the second one and it gives 
\begin{eqnarray*}
&&\sum_{p+1<q}(-1)^{\varepsilon _{3,K}}l_{\tau _{p}\tau _{q}}\cdot \\
&&\cdot Tr(A_{\rho _{1}}\ldots A_{\rho _{r}})Tr(A_{\tau _{1}}\ldots A_{\tau
_{p-1}}A_{\tau _{q+1}}\ldots A_{\tau _{t}})Tr(A_{\tau _{p+1}}\ldots A_{\tau
_{q-1}})
\end{eqnarray*}%
corresponding to the third term. So I get the three terms corresponding
exactly to the noncommutaive Batalin -Vilkovisky operator defined in (\cite%
{B1},\cite{B2})
\end{proof}

\bigskip Since the map $\mu _{F}$ respects the multiplicative structure, the
similar result concerning the odd symplectic bracket follows immediately.

\begin{proposition}
\label{propbrackt}The odd symplectic bracket 
\begin{equation*}
F_{n}\otimes F_{n^{\prime }}\rightarrow F_{n+n^{\prime }-2}
\end{equation*}%
coincides with the odd symplectic bracket on the $GL(N|N)-$invariant
subspaces for sufficiently big $N>n+n^{\prime }$ 
\begin{multline*}
S^{n}(gl(N|N)\otimes V)^{GL(N|N)}\otimes S^{n^{\prime }}(gl(N|N)\otimes
V)^{GL(N|N)}\rightarrow \\
\rightarrow S^{n+n^{\prime }-2}(gl(N|N)\otimes V)^{GL(N|N)}.
\end{multline*}
\end{proposition}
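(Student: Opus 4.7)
The plan is to deduce this immediately from Theorem \ref{deltamx} by exploiting the universal relation between the BV operator, the commutative product, and the odd Poisson bracket in any BV-algebra. Recall that on a commutative superalgebra equipped with an odd square-zero second-order derivation $\Delta$, the associated odd bracket is uniquely determined by
\begin{equation*}
\{a,b\} = (-1)^{\overline{a}}\bigl(\Delta(ab) - (\Delta a)\,b - (-1)^{\overline{a}}\,a\,(\Delta b)\bigr),
\end{equation*}
so that knowing $\Delta$ and the product determines the bracket. Both sides of the asserted identification carry such a BV-structure: on $F$ the product is the symmetric algebra structure on cyclic words and $\Delta$ is the dissection-gluing operator of \cite{B1}, while on the symmetric algebra of $gl(N|N)\otimes V$ the product is the obvious one and $\Delta$ is the constant-coefficient BV-operator (\ref{deltaAA}) associated with the odd symplectic form $\widehat{l}$, restricted to $GL(N|N)$-invariants.

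The key observation, already emphasized just before the statement, is that $\mu_F$ respects the multiplicative structure: by the lemma preceding Theorem \ref{deltamx}, a product of cyclic words $(a_{\rho_1}\ldots a_{\rho_r})^c\cdot\ldots\cdot(a_{\tau_1}\ldots a_{\tau_t})^c$ maps to the corresponding product of traces $Tr(A_{\rho_1}\ldots A_{\rho_r})\cdot\ldots\cdot Tr(A_{\tau_1}\ldots A_{\tau_t})$, and more generally $\mu_F$ assembles into an algebra isomorphism between $\oplus_n F_n$ and $\oplus_n S^n(gl(N|N)\otimes V)^{GL(N|N)}$ in the stable range. Combined with Theorem \ref{deltamx}, which shows that $\mu_F$ intertwines the two $\Delta$'s, the displayed identity forces $\mu_F$ to intertwine the two brackets as well.

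The only thing to check is that $N$ can be chosen large enough for everything to make sense simultaneously. For $a\in F_n$ and $b\in F_{n'}$ we need $\mu_F$ to be an isomorphism on $F_n$, $F_{n'}$, $F_{n+n'}$ and $F_{n+n'-2}$, and we need Theorem \ref{deltamx} to apply to compute $\Delta(ab)$, $\Delta(a)b$, $a\Delta(b)$; all of these are guaranteed by the hypothesis $N>n+n'$. There is no real obstacle in this proof: once one concedes that the bracket is recovered from $(\Delta,\cdot)$ by the BV-identity, the statement is an automatic corollary of Theorem \ref{deltamx} and the multiplicativity of $\mu_F$, and any remaining work consists only in Koszul-sign bookkeeping inherited from the conventions fixed in the previous theorem.
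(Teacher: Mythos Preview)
Your argument is correct and is exactly the approach the paper uses: it deduces the proposition ``immediately'' from Theorem~\ref{deltamx} together with the multiplicativity of $\mu_F$, via the BV identity expressing the bracket through $\Delta$ and the product. You have simply spelled out in more detail what the paper states in one sentence.
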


\begin{remark}
The bracket on the subspace $F_{1}$, linearly generated by cyclic words,
coincides with the symplectic bracket from \cite{K}. As explained in \cite%
{B1}, the noncommutative symplectic geometry from \cite{K} can be viewed as
the quasiclassical or, equivalently, tree-level approximation of the
noncommutative Batalin-Vilkovisky geometry described in \cite{B1}.
\end{remark}

The important consequence of the theorem \ref{deltamx} is that the
cohomology of the differential $\Delta $ acting on $F$ is zero. This follows
from the standard identification of the Batalin-Vilkovisky differential on
affine space with the de Rham differential.

\begin{proposition}
The matrix Batalin-Vilkovisky complex $(S(gl(N|N)\otimes V),\Delta )$ is
naturally isomorphic to the de Rham complex of the affine space $%
(gl(N|N)\otimes \Pi V)_{0}$
\end{proposition}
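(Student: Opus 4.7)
The plan is to invoke the standard ``odd Fourier transform'' identification, which relates the BV algebra of an odd symplectic affine super vector space to the de Rham complex of a Lagrangian subspace.

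Since $\hat{l}$ is odd and non-degenerate, both the even part $W_{0}$ and the odd part $W_{1}$ of $W=gl(N|N)\otimes V$ are Lagrangian subspaces of $W$, non-degenerately paired with each other by $\hat{l}$ (the restriction $\hat{l}|_{W_{i}\otimes W_{i}}$ lands in the zero subspace of $k$ of parity $2i+1$). The canonical isomorphism $W=\Pi\bigl(gl(N|N)\otimes \Pi V\bigr)$ sends the even part $(gl(N|N)\otimes \Pi V)_{0}$ onto the odd part $W_{1}\subset W$. Setting $L=(gl(N|N)\otimes \Pi V)_{0}$, the Lagrangian splitting $W=W_{0}\oplus W_{1}$ together with the $\hat{l}$-induced identification $W_{0}\cong \Pi L^{*}$ realises $W$ as the odd cotangent bundle $\Pi T^{*}L$.

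Next, I would choose Darboux coordinates $\{y^{a}\}$ on $L$ and dual odd coordinates $\{\eta_{a}\}$ on $\Pi L^{*}\cong W_{0}$, under which the matrix-index formula (\ref{deltaAA}) for $\Delta$ assumes the canonical form $\Delta=\sum_{a}\partial_{y^{a}}\partial_{\eta_{a}}$. The odd Fourier transform in the $\eta$-variables, i.e.\ Berezin integration against $\exp(\sum_{a}dy^{a}\eta_{a})$, identifies the algebra $S(W)$ of polynomial functions on $W$ with $\Omega^{*}(L)=S(L^{*})\otimes \Lambda(L^{*})$ by replacing $\partial_{\eta_{a}}$ with wedging by the de Rham 1-form $dy^{a}$. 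Under this identification $\Delta$ is carried precisely to $d=\sum_{a}dy^{a}\,\partial_{y^{a}}$, the de Rham differential on $L$.

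The one point needing care is the tracking of Koszul signs: one must verify that the matrix-index formula (\ref{deltaAA}), after the basis dualisation induced by $tr$ on $gl(N|N)$ and $l$ on $V$, indeed reduces to the Darboux form above, and that the odd Fourier transform preserves signs correctly. This is routine using the Koszul sign formalism already exploited in the proof of Theorem \ref{deltamx}.
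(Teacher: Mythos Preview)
Your proposal is correct and follows essentially the same route as the paper: both invoke the standard odd Fourier transform identifying the BV algebra on an odd symplectic affine space, via a Lagrangian splitting, with the de Rham complex of the Lagrangian $(gl(N|N)\otimes \Pi V)_{0}$. The paper phrases the transform concretely as contraction of products of the odd coordinates $X_{j,\alpha}^{\beta}$ against the constant volume form $\Omega$ on $(gl(N|N)\otimes \Pi V)_{0}$, whereas you use the equivalent $\Pi T^{*}L$ and Berezin-integral formulation; the content is the same.
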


\begin{proof}
Let's identify $S(gl(N|N)\otimes V)$ with algebra of polynomial functions on 
$\limfunc{Hom}(V,gl(N|N))\overset{l}{\simeq }$ $gl(N|N)\otimes \Pi V$. If $%
x_{i}\in V_{0}$, $x_{\pi i}\in V_{1}$ is a basis in which the odd scalar
product has the standard form $l(x_{i},x_{\pi j})=\delta _{ij}$ then, in
terms of the corresponding matrix elements $X_{i,\alpha }^{\beta }$,
generating the algebra of polynomial functions on $\limfunc{Hom}(V,gl(N|N)$,
the isomorphism with the de Rham complex is the standrad isomorphism between
polyvector fields and forms, the "odd Fourier transform". For any set of odd
elements $X_{j,\alpha }^{\beta }$,$\overline{j}+\overline{\alpha }+\overline{%
\beta }=1$, it sends their product to the differential form 
\begin{equation}
\tprod X_{j,\alpha }^{\beta }\leftrightarrow \left( \tprod i(\frac{\partial 
}{\partial X_{\pi j,\beta }^{\alpha }})\right) \Omega   \label{fourodd1}
\end{equation}%
where $X_{\pi j,\beta }^{\alpha }$ are the even elements and%
\begin{equation*}
\Omega =\tprod_{\overline{j}+\overline{\alpha }+\overline{\beta }%
=0}d(X_{j,\alpha }^{\beta })
\end{equation*}%
is the canonical constant volume form on $(gl(N|N)\otimes \Pi V)_{0}$, and $%
i(v)$ is the standard contraction with the vector field $v$.
\end{proof}

\begin{theorem}
The cohomology of the Batalin-Vilkovisky differential acting on $F$ are
trivial: $H^{\ast }(\oplus _{n=0}^{\infty }F_{n},\Delta )=0$ .
\end{theorem}

\begin{proof}
The (algebraic) de Rham complex on the affine space $(gl(N|N)\otimes \Pi
V)_{0}$ has an extra grading $\deg (p(x)\Pi dx^{\alpha })=\deg (p(x))$, so
that, for $N>n+2$, 
\begin{eqnarray*}
F_{n} &\simeq &(\tbigoplus_{i=0}^{n}\Omega _{(gl(N|N)\otimes \Pi
V)_{0}}^{2rN^{2}-i,n-i})^{GL(N|N)}, \\
d_{DR} &:&\Omega _{(gl(N|N)\otimes \Pi V)_{0}}^{j,l}\rightarrow \Omega
_{(gl(N|N)\otimes \Pi V)_{0}}^{j+1,l-1}
\end{eqnarray*}%
since the constant volume form $\Omega $ is $gl(N|N)-$invariant. The
cohomology of the de Rham differential are trivial on every bi-graded piece $%
\Omega _{(gl(N|N)\otimes \Pi V)_{0}}^{2rN^{2}-i,n-i}$. The standard
arguments, see e.g.\cite{GW}, \cite{L}, show that the cohomology are
concentrated on the $GL(N|N)$-invariant subspace. It follows that $\ker
\Delta |_{F_{n}}=$ $\func{im}\Delta |_{F_{n+2}}$.
\end{proof}

Another consequence of the theorem \ref{deltamx} is a version of Morita
equivalence, i.e. the action by tensor multiplication by $(gl(k|\widetilde{k}%
),tr)$ on solutions to the noncommutative BV-equation. Recall, see \cite{B1},%
\cite{B2}, that the noncommutative Batalin-Vilkovisky equation is the
equation 
\begin{equation}
\hbar \Delta S+\frac{1}{2}\{S,S\}=0\Leftrightarrow \Delta (\exp \frac{1}{%
\hbar }S)=0  \label{ncBV}
\end{equation}%
for series of products of cyclic words 
\begin{equation*}
S=\sum_{i,g\geq 0}\hbar ^{2g+i-1}S_{i,g},~~S_{i,g}\in F_{i}
\end{equation*}

\begin{theorem}
Let 
\begin{equation*}
(\widetilde{V},\widetilde{l})=(V,l)\otimes (gl(k|\widetilde{k}),tr)
\end{equation*}%
and let $M:F(V)\rightarrow F(\widetilde{V})$ be the algebra map defined on
generators by%
\begin{equation*}
(a_{\rho _{1}},\ldots ,a_{\rho _{r}})^{c}\rightarrow tr(A_{\rho _{1}},\ldots
,A_{\rho _{r}})^{c},
\end{equation*}%
Then for any solution $S$ to the noncommutative Batalin-Vilkovisky equation
in $F(V)$, $M(S)$ is a solution in $F(\widetilde{V})$. These solutions have
extra $gl(k|\widetilde{k})-$symmetry.
\end{theorem}

\begin{proof}
Notice that as vector spaces with scalar products 
\begin{equation*}
(gl(N|N),tr)\otimes (gl(k|\widetilde{k}),tr)\simeq (gl(\widetilde{N},%
\widetilde{N}),tr)
\end{equation*}%
where $\widetilde{N}=N(k+\widetilde{k})$, and therefore,%
\begin{equation*}
gl(\widetilde{N}|\widetilde{N})\otimes \Pi V\simeq gl(N|N)\otimes \Pi 
\widetilde{V}
\end{equation*}%
as affine spaces with constant BV structures. The $GL(\widetilde{N}|%
\widetilde{N})-$invariant function $\mu _{F(V)}(S)$ on $gl(\widetilde{N}|%
\widetilde{N})\otimes \Pi V$, coincides with $GL(N|N)-$invariant function on 
$gl(N|N)\otimes \Pi \widetilde{V}$ corresponding to $\mu _{F(\widetilde{V}%
)}M(S)$. Therefore $\mu _{F(\widetilde{V})}M(S)$ satisfies the
Batalin-Vilkovisky quantum master equation and so does $M(S)$.
\end{proof}

\section{Modular operad structure on $k[S_{n}]$.}

In \textbf{\ }\cite{B1} the operations on collection of spaces $%
\{k[S_{n}]\}_{n\geq 1}$ giving rise to the modular operad structure were
defined. The subspace of cyclic permutations corresponds to the cyclic
operad of associative algebras with\emph{\ }scalar product. The relation
with $GL(U)-$invariant tensors on the matrix spaces allows to give a
straightforward definition for this modular operad structure.

I work in the category of $\mathbb{Z}/2\mathbb{Z}$-graded vector spaces and
the modification of the modular operad notion needed here is defined as the
algebra over triple, which is the functor on $\mathbb{S}-$modules given by 
\begin{equation*}
\mathbb{M}\mathcal{V}((n))=\bigoplus_{G\in \Gamma ((n))}\mathcal{V}%
((G))_{Aut(G)}
\end{equation*}%
i.e. forgetting the extra $\mathbb{Z}-$grading, compared with definition
from \cite{GK}. It is straightforward to see that forgetting the extra $%
\mathbb{Z}-$grading and orientation on the spaces of cycles, the formulas
from (\cite{B1}, section 9) define such modular operad, which I denote also
by $\mathcal{S}$ as in loc.cit.

Consider the endomorphism modular operad $\mathcal{E}[End_{k}(U)]$,
associated with the vector space $End_{k}(U)$, $\dim _{k}U=(N|N)$, equipped
with the even scalar product defined by the super trace (\ref{trEnd}). I
have 
\begin{equation*}
\mathcal{E}[End_{k}(U)]((n))=End_{k}(U)^{\otimes n}
\end{equation*}%
and contractions along graphs are defined via contractions with the
two-tensor corresponding to the super trace. The structure maps of $\mathcal{%
E}[End_{k}(U)]$ are invariant under the $GL(U)$-action. Consider the $GL(U)-$%
invariant modular suboperad $\mathcal{E}[End_{k}(U)]^{GL(U)}$. Because of (%
\ref{mukasn}) its components for$\,\,n<N$ are the same as the components of
the operad $\mathcal{S}((n))=k[S_{n}]$ 
\begin{equation*}
\mathcal{E}[End_{k}(U)]((n))^{GL(U)}\simeq \mathcal{S}((n)).
\end{equation*}%
For the space $U^{\prime }=U\oplus k^{1|1}$, the natural maps $\mathcal{E}%
[End_{k}(U)]((n))^{GL(U)}\rightarrow \mathcal{E}[End_{k}(U^{\prime
})]((n))^{GL(U^{\prime })}$ are isomorphisms for small $\,n<N$. Consider the
modular operad $\mathcal{E}[End_{k}]^{GL}$ which is the direct limit of $%
\mathcal{E}[End_{k}(U_{i})]^{GL(U_{i})}$, $\dim _{k}U_{i}=(N_{i}|N_{i})$, $%
N_{i}\rightarrow \infty $:%
\begin{equation*}
\mathcal{E}[End_{k}]^{GL}=\lim_{\rightarrow }\mathcal{E}%
[End_{k}(U_{i})]^{GL(U_{i})}
\end{equation*}

Recall, see (\cite{B1}, section 9), that the basic contraction operators 
\begin{equation*}
\mu _{ff^{\prime }}^{\mathcal{S}}:\mathcal{S}((I\sqcup \{f,f^{\prime
}\}))\rightarrow \mathcal{S}((I))
\end{equation*}%
are defined for the modular operad $\mathcal{S}$ as the linear maps%
\begin{equation*}
k[Aut(I\sqcup \{f,f^{\prime }\})]\rightarrow k[Aut(I)]
\end{equation*}%
which act on permutations of the set $(I\sqcup \{f,f^{\prime }\})$ via 
\begin{eqnarray*}
(\rho _{1}\ldots \rho _{p-1}f\rho _{p+1}\ldots \rho _{r})\ldots (\tau
_{1}\ldots \tau _{q-1}f^{\prime }\tau _{q+1}\ldots \tau _{t}) &\rightarrow &
\\
\rightarrow (\rho _{1}\ldots \rho _{p-1}\tau _{q+1}\ldots \tau _{t}\tau
_{1}\ldots \tau _{q-1}\rho _{p+1\ldots }\rho _{r})\ldots &&
\end{eqnarray*}%
if the elements $f$ and $f^{\prime }$ are in the different cycles of the
permutation, and via%
\begin{eqnarray}
(\rho _{1}\ldots \rho _{p-1}f\rho _{p+1}\ldots \rho _{q-1}f^{\prime }\rho
_{q+1}\ldots \rho _{r})\ldots (\tau _{1}\ldots \tau _{t}) &\rightarrow &
\label{rofrof} \\
\rightarrow (\rho _{1}\ldots \rho _{p-1}\rho _{q+1}\ldots \rho _{r})(\rho
_{p+1}\ldots \rho _{q-1})\ldots (\tau _{1}\ldots \tau _{t}) &&  \notag
\end{eqnarray}%
\begin{equation}
(\rho _{1}\ldots \rho _{p-1}ff^{\prime }\rho _{p+1}\ldots \rho _{r})\ldots
(\tau _{1}\ldots \tau _{t})\rightarrow 0  \label{roff}
\end{equation}%
if the elements $f$ and $f^{\prime }$ are in the same cycle of the
permutation.

\begin{proposition}
The modular operad $\mathcal{S}$ is isomorphic to the modular operad $%
\mathcal{E}[End_{k}]^{GL}$
\end{proposition}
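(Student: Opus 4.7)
The plan is to use the component-wise isomorphism $\mu : k[\mathbb{S}_n] \simeq (End_k(U)^{\otimes n})^{GL(U)}$ of (\ref{mukasn}), stabilised in the direct limit $N \to \infty$, and show that it intertwines the two modular operad structures. A modular operad (in the $\mathbb{Z}/2\mathbb{Z}$-graded variant used here, forgetting the genus grading) is generated by its components, the $\mathbb{S}_n$-actions, and the basic contractions $\mu_{ff'}$. Since $\mu$ is manifestly $\mathbb{S}_n$-equivariant -- conjugation on $k[\mathbb{S}_n]$ corresponds to relabelling tensor factors of $End_k(U)^{\otimes n}$, and these are the two $\mathbb{S}_n$-actions on the two sides -- it suffices to verify that $\mu$ intertwines the basic contractions.

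Concretely, starting from
\[
\mu([\sigma]) = \sum_{\alpha_1,\ldots,\alpha_n}(-1)^{\epsilon_K}\, E^{\alpha_{\sigma^{-1}(1)}}_{\alpha_1} \otimes \cdots \otimes E^{\alpha_{\sigma^{-1}(n)}}_{\alpha_n},
\]
I would apply the contraction at positions $f,f'$ defined by the supertrace bilinear form (\ref{trEnd}) on $End_k(U)$. The elementary matrices are orthogonal for this pairing, so the contraction reduces to the index identifications $\alpha_{\sigma^{-1}(f)} = \alpha_{f'}$ and $\alpha_{\sigma^{-1}(f')} = \alpha_f$, carried out with the appropriate signs. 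In terms of the "index flow" defined by $\sigma$, these identifications splice the positions $f,f'$ out of the permutation and reconnect their neighbours, producing exactly the combinatorial operation defining $\mu^{\mathcal{S}}_{ff'}$.

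Three cases then arise, matching the defining rules. If $f$ and $f'$ lie in distinct cycles of $\sigma$, the splicing merges the two cycles into one, reproducing the first defining rule. If $f$ and $f'$ lie in the same cycle but are non-adjacent, the splicing breaks that cycle into two shorter ones, matching (\ref{rofrof}). If $f$ and $f'$ are adjacent in the same cycle, the splicing closes a loop that collapses to the factor $Tr(Id) = \sum_{\alpha}(-1)^{\overline{\alpha}}$, which vanishes by the standing hypothesis $\dim_k U_0 = \dim_k U_1$; this matches the vanishing rule (\ref{roff}). This is precisely the case analysis already performed in the proof of Theorem \ref{deltamx}, where $\Delta$ acts on the symmetric algebra as a sum of basic contractions.

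The main obstacle is, as always, the bookkeeping of Koszul signs; I would discharge it exactly as in the proof of Theorem \ref{deltamx}, by attaching auxiliary odd parameters to the odd generators and reducing to the purely even case, where the signs are trivial. One small additional point to check is that the basic contractions are compatible with the stabilisation maps $\mathcal{E}[End_k(U)]^{GL(U)} \to \mathcal{E}[End_k(U \oplus k^{1|1})]^{GL(U \oplus k^{1|1})}$; this is immediate, since the supertrace on $U \oplus k^{1|1}$ restricts to the supertrace on $U$, so the isomorphism descends compatibly to the direct limit $\mathcal{E}[End_k]^{GL}$.
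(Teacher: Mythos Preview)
Your proposal is correct and follows essentially the same approach as the paper: the paper's proof simply states that the calculations are very similar to those in the proof of Theorem~\ref{deltamx}, and singles out the fact that the condition $\dim_k U_0 = \dim_k U_1$ (equation~(\ref{uevenuodd})) implies the vanishing rule~(\ref{roff}). You have spelled out exactly this argument in detail, including the three-case analysis and the crucial appearance of $Tr(Id)=0$ in the adjacent case, so there is nothing to add.
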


\begin{proof}
The calculations are very similar to the calculations from the proof of the
theorem \ref{deltamx}. In particular the condition (\ref{uevenuodd}) implies
(\ref{roff}).
\end{proof}

\section{\protect\bigskip Even scalar product.}

In the case of even scalar product the quantum master equation of the
noncommutative Batalin-Vilkovisky geometry is defined on the space 
\begin{equation*}
F=Symm(\oplus _{j=0}^{\infty }\Pi ((\Pi V)^{\otimes j})^{\mathbb{Z}/j\mathbb{%
Z}})
\end{equation*}%
with components 
\begin{equation*}
F_{n}=((\Pi V)^{\otimes n}\otimes k[\mathbb{S}_{n}]^{\prime })^{\mathbb{S}%
_{n}}
\end{equation*}%
where $k[\mathbb{S}_{n}]^{\prime }$ is the vector space with the basis
indexed by elements $(\sigma ,\rho _{\sigma })$, where $\sigma \in \mathbb{S}%
_{n}$ is a permutation with $i_{\sigma }$ cycles $\sigma _{\alpha }$ and $%
\rho _{\sigma }=\sigma _{1}\wedge \ldots \wedge \sigma _{i_{\sigma }}$, $%
\rho _{\sigma }\in Det(Cycle(\sigma ))$, $Det(Cycle(\sigma
))=Symm^{i_{\sigma }}(k^{0|i_{\sigma }})$, is one of the generators of the
one-dimensional determinant of the set of cycles of $\sigma $, i.e. $\rho
_{\sigma }$ is an order on the set of cycles defined up to even reordering,
and $(\sigma ,-\rho _{\sigma })=-(\sigma ,\rho _{\sigma })$.

If an even scalar product is fixed on the space $V$ , then $F$ has canonical
differential $\Delta $, and it defines the Batalin-Vilkovisky algebra
structure on $F$, see \cite{B1},\cite{B2}.

Consider again the $\mathbb{Z}/2\mathbb{Z}$-graded vector space $U$, $\dim
U_{0}=\dim U_{1}=N$. Let $p$, $p^{2}=1$, denotes an \emph{odd} involution
acting on $U$. It acts by interchanging isomorphically $U_{0}$ with $U_{1}$.
The Bernstein-Leites algebra is the subalgebra of $End(U)$ of operators
commuting with $p$:%
\begin{equation*}
q(U)=\{G\in End(U)|\,[G,p]=0\}.
\end{equation*}%
It looks as follows in the standard block decomposition of supermatrices:%
\begin{equation}
G=\left( 
\begin{array}{cc}
X & Y \\ 
-Y & X%
\end{array}%
\right)   \label{GXdzeta}
\end{equation}%
in the base in which $p=\left( 
\begin{array}{cc}
0 & 1_{N} \\ 
1_{N} & 0%
\end{array}%
\right) $. As a $\mathbb{Z}/2\mathbb{Z}$-graded vector space $q(U)$ is
isomorphic to $\Pi TEnd(U_{0})$: 
\begin{equation*}
q(U)=End(U_{0})\oplus \Pi End(U_{0}).
\end{equation*}%
The algebra structure is different however from the standard multiplication
on $End(U_{0})\otimes k[\xi ]/\{\xi ^{2}=0\}$. The algebra $q(U)$ is
isomorpic to the tensor product of $End(U_{0})$ with the Clifford algebra $%
Cl(1)$: 
\begin{equation*}
q(U)=End(U_{0})\otimes Cl(1),Cl(1)=k[\xi ]/\{\xi ^{2}=1\}
\end{equation*}%
The property of $q(U)$ of the main interest here is that $q(U)$ has an \emph{%
odd }analog of the super trace functional:%
\begin{gather*}
otr(G)=\frac{1}{2}tr(Gp)=(-1)^{\overline{G}}\frac{1}{2}tr(pG)=trY, \\
otr([G,G^{\prime }])=0
\end{gather*}%
which gives canonical \emph{odd }invariant scalar product on $q(U)$:%
\begin{equation*}
(G,G^{\prime })\rightarrow otr(GG^{\prime })=tr(XY^{\prime })+tr(YX^{\prime
}).
\end{equation*}%
This odd scalar product on $q(U)$ together with even scalar product on $V$
defines the natural odd symmetric scalar product on the tensor product $%
\mathbb{Z}/2\mathbb{Z}$-graded vector space%
\begin{equation*}
\limfunc{Hom}(q(U),\Pi V)
\end{equation*}%
Therefore, as in the previoius case, this space is an affine space with
constant odd symplectic structure and therefore, its algebra of symmetric
tensors 
\begin{equation*}
\oplus _{n=0}^{\infty }S^{n}\limfunc{Hom}(q(U),\Pi V)
\end{equation*}%
has natural structure of Batalin-Vilkovisky algebra.

The subgroup $GQ(U)\subset GL(U)$, preserving the odd involution $p$:%
\begin{equation*}
GQ(U)=\{g\in GL(U)|gpg^{-1}=p\}.
\end{equation*}%
is the super group, which is acts on the Lie algebra $q(U)$ via the adjoint
representation. The supergroup $GQ$ is an odd analog of the general linear
group $GL$. In order to desctribe the quantum master equation of
noncommutative Batalin-Vilkovisky geometry in the even scalar product case
I've introduced in \cite{B1} the twisted group algebras $k[\mathbb{S}%
_{n}]^{\prime }$. Next proposition shows that, as it follows from the
results of \cite{S}, taking invariants in the tensor powers of the coadjoint
representation of $q(U)$ gives precisely this twisted group algebras $k[%
\mathbb{S}_{n}]^{\prime }$, in complete analogy with the result for $gl(U)$.
The description of $q(U)-$invariants in terms of products of odd traces
based on this seems to be new.

\begin{proposition}
\label{ksnpr} 
\begin{equation}
\limfunc{Hom}(q(U)^{\otimes n},k)^{GQ(U)}=k[\mathbb{S}_{n}]^{\prime }
\label{qksnpr}
\end{equation}%
for $\dim U=(N|N)$ sufficiently big $(N>n)$. The $GQ(U)-$invariants in $%
\limfunc{Hom}(q(U)^{\otimes n},k)$ are spanned linearly by the products of
odd traces:%
\begin{equation*}
otr(A_{\rho _{1}}\ldots A_{\rho _{r}})\cdot \ldots \cdot otr(A_{\tau
_{1}}\ldots A_{\tau _{t}}),\,\,\,\,A_{i}\in q(U).
\end{equation*}
\end{proposition}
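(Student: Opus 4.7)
The plan is to adapt the $GL(U)$-invariant argument of Proposition \ref{propFn} to the queer setting; the crucial new input is the first fundamental theorem of invariant theory for the queer supergroup from \cite{S}. I would first construct a linear map
\[
\Psi : k[\mathbb{S}_n]' \to \mathrm{Hom}(q(U)^{\otimes n}, k)^{GQ(U)}
\]
sending a basis element $(\sigma, \rho_\sigma)$, with cycle decomposition $\sigma = (\rho_1 \ldots \rho_r) \ldots (\tau_1 \ldots \tau_t)$ listed in the order specified by $\rho_\sigma$, to the functional $A_1 \otimes \cdots \otimes A_n \mapsto otr(A_{\rho_1} \cdots A_{\rho_r}) \cdots otr(A_{\tau_1} \cdots A_{\tau_t})$. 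Each factor is $GQ(U)$-invariant because $gpg^{-1}=p$ gives $otr(gAg^{-1}) = \tfrac{1}{2}tr(gAg^{-1}p) = \tfrac{1}{2}tr(Ap) = otr(A)$. The dependence on the ordering $\rho_\sigma$ matches the relation $(\sigma,-\rho_\sigma) = -(\sigma,\rho_\sigma)$: swapping two cyclic factors interchanges two odd functionals and therefore introduces a Koszul sign $-1$, exactly matching the reordering of odd generators of $Det(Cycle(\sigma)) = Symm^{i_\sigma}(k^{0|i_\sigma})$. Injectivity of $\Psi$ is verified by evaluating on elementary supermatrices in $q(U)$, in the same spirit as the lemma following Proposition \ref{propFn}.

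For surjectivity I would invoke Sergeev's Schur--Weyl duality \cite{S}: for $N > n$, the supergroup $GQ(U)$ and the Sergeev superalgebra $\mathcal{H}_n = k[\mathbb{S}_n] \otimes Cl(n)$ generate each other's commutants inside $End(U^{\otimes n})$. Using the nondegenerate odd $GQ(U)$-invariant pairing $\langle G, G' \rangle = otr(GG')$ to identify $q(U)^{\ast} \simeq \Pi q(U)$, one can translate Sergeev's statement into a description of $\mathrm{Hom}(q(U)^{\otimes n}, k)^{GQ(U)}$. The point is that the Clifford generators of $\mathcal{H}_n$ act as the involution $p$ on individual tensor factors; on the $p$-centralizer $q(U) \subset End(U)$ they reduce to the identity, so only the $k[\mathbb{S}_n]'$-part of the Sergeev algebra contributes independent invariants. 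A dimension comparison then forces $\Psi$ to be surjective, yielding the isomorphism (\ref{qksnpr}); the spanning statement by products of odd traces is immediate from the construction of $\Psi$.

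The main obstacle is carrying out this translation carefully: one must move from Sergeev's commutant statement inside $End(U^{\otimes n})^{GQ(U)}$ to the desired statement about $\mathrm{Hom}(q(U)^{\otimes n}, k)^{GQ(U)}$, which requires restricting from $End(U)$ to its subalgebra $q(U)$, tracking the parity shift $q(U)^{\ast} \simeq \Pi q(U)$ produced by the oddness of $otr$, and verifying that the Clifford generators really do collapse on this subspace rather than producing extra independent invariants. Once this bookkeeping is handled, the identification of $k[\mathbb{S}_n]'$ with the span of odd-trace products is forced by matching bases indexed by permutations together with an ordering of cycles, in exact parallel with the $GL$-invariant computation already carried out for Theorem \ref{deltamx}.
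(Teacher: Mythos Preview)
Your proposal is correct and rests on the same fundamental input as the paper, namely Sergeev's invariant theory for $GQ(U)$ from \cite{S}. The difference lies in how surjectivity is obtained. Rather than translating the commutant form of Schur--Weyl duality through the parity shift $q(U)^{\ast}\simeq \Pi q(U)$ and then running a dimension count, the paper works more concretely: it first describes the $GQ(U)$-invariants in $\mathrm{Hom}(End(U)^{\otimes n},k)$ as products of traces with arbitrary insertions of the odd involution $p$ (these invariants are exactly indexed by the Sergeev algebra $k[\mathbb{S}_n]\otimes Cl(n)$), and then uses the $GQ(U)$-invariant splitting $End(U)=q(U)\oplus\{G:Gp+pG=0\}$ to conclude that restriction to $q(U)^{\otimes n}$ is onto. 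On $q(U)$ the inserted $p$'s commute past every factor, so each trace reduces to $tr(p^{l}A_{\rho_1}\cdots A_{\rho_r})$ with $l\in\{0,1\}$; since the ordinary supertrace vanishes identically on $q(U)$, only the $l=1$ (odd-trace) terms survive. This makes your ``Clifford generators collapse on the $p$-centralizer'' step completely explicit without any bookkeeping of the odd dual identification. Linear independence is then deduced from the corresponding $gl$ result, matching your injectivity argument by evaluation.
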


\begin{proof}
See \cite{S} for the proof of the first statement. Here is a sketch of
alternative argument. The basis for invariants of the $GQ(U)$ action on $%
\limfunc{Hom}(End(U),k)$ is $tr(G)=\sum_{a}e_{a}\otimes e^{a}$ and $%
tr(pG)=\sum_{a}(pe_{a})\otimes e^{a}$. Therefore on the space of tensors $%
\limfunc{Hom}(End(U),k)^{\otimes n}$ the space of $GQ(U)-$invariants is
spanned by all possible combinations of these two elements of the form 
\begin{equation*}
\sum_{a_{1},\ldots ,a_{n}}(e_{a_{\sigma (1)}}\otimes e^{a_{1}})\otimes
\ldots \otimes (pe_{a_{\sigma (i)}}\otimes e^{a_{i}})\otimes \ldots ,
\end{equation*}%
where $\sigma \in \mathbb{S}_{n}$. Such element corresponds to an arbitrary
permutation $\sigma \in \mathbb{S}_{n}$ and the marking which associates one
of the two types of tensors $(e_{a_{\sigma (i)}}\otimes e^{a_{i}})$ or $%
(pe_{a_{\sigma (i)}}\otimes e^{a_{i}})$ to every $i\in \{1,\ldots ,n\}$. If
the cycle decomposition of $\sigma $ is denoted by $(\rho _{1}\ldots \rho
_{r})\ldots (\tau _{1}\ldots \tau _{t})$, then such an element gives the
linear functional on $End(U)^{\otimes n}$ of the following type:%
\begin{equation*}
tr(A_{\rho _{1}}\ldots A_{\rho _{i-1}}pA_{\rho _{i}}\ldots A_{\rho
_{r}})\cdot \ldots \cdot tr(A_{\tau _{1}}\ldots A_{\tau _{j-1}}pA_{\tau
_{j}}\ldots A_{\tau _{t}})
\end{equation*}%
consisting of products of traces of compositions of the endomorphisms with
arbitrary inclusions of the operator $p$. The subspace $q(U)\subset End(U)$
has complementary subspace, preserved by $GQ(U)$, which consists of
endomorphisms anticommuting with $p$. Therefore, the restriction map from $%
\limfunc{Hom}(End(U)^{\otimes n},k)^{GQ(U)}$ to $\limfunc{Hom}(q(U)^{\otimes
n},k)^{GQ(U)}$ is onto. The operator $p$ commutes with any $A_{\rho _{j}}\in
q(U)$. Therefore on $q(U)^{\otimes n}$ all inclusions of $p$ inside the
given trace cancel with each other, except for possibly one inclusion: 
\begin{equation*}
tr(A_{\rho _{1}}\ldots A_{\rho _{i-1}}pA_{\rho _{i}}\ldots A_{\rho
_{r}})=tr(p^{l}A_{\rho _{1}}\ldots A_{\rho _{r}})
\end{equation*}%
where $l=0$ or $l=1$ depending on the parity of the total number of
inclusions of $p$. Notice now that for any $A\in q(U)$ , $tr(A)=0$. And
therefore, the traces with even number of inclusions of $p$ vanish on $%
q(U)^{\otimes n}$. The trace with odd number of inclusions of $p$ becomes
the odd trace $otr$ when restricted to $q(U)$ . Therefore the $GQ(U)-$%
invariants in $\limfunc{Hom}(q(U)^{\otimes n},k)$ are spanned by the
products of odd traces:%
\begin{equation*}
otr(A_{\rho _{1}}\ldots A_{\rho _{r}})\cdot \ldots \cdot otr(A_{\tau
_{1}}\ldots A_{\tau _{t}}),\,\,\,\,A_{i}\in q(U).
\end{equation*}%
One can also deduce from the corresponding result for $gl$, that these
products of odd traces are linearly independent for $N\geq n$.
\end{proof}

The super group $GQ(U)$ preserves the odd trace $otr$ and therefore, the
invariants subspace $\oplus _{n=1}^{\infty }(S^{n}$ $\limfunc{Hom}(q(U),\Pi
V))^{GQ(U)}$ inherits the natural Batalin-Vilkovisky algebra structure. I
have now the following analogs of the propositions \ref{propFn}, \ref%
{propbrackt} and of the theorem \ref{deltamx}. The proofs are completely
analogous to the proofs in the odd scalar product case.

\begin{proposition}
The vector space $F_{n}$ is canonically identified with $GQ(U)$- invariant
subspace of $n-$th symmetric powers of the vector space $q(U)\otimes V$: 
\begin{equation}
F_{n}\simeq \left( S^{n}\limfunc{Hom}(q(U),\Pi V)\right) ^{GQ(U)},
\label{thetainv}
\end{equation}%
where $q(U)\subset GL(U)$ is the odd general linear algebra and $\dim
_{k}U=N $, $N\geq n$.
\end{proposition}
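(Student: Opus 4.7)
The plan is to mirror the proof of Proposition \ref{propFn} step by step, with the classical invariant-theory isomorphism (\ref{mukasn}) replaced throughout by its $GQ(U)$-analog (\ref{qksnpr}) established in Proposition \ref{ksnpr}, and with $V$ replaced by $\Pi V$. The guiding principle is that the even scalar product case differs from the odd one precisely by the parity shift $V\leadsto \Pi V$ on the vector space side and the replacement of $End_{k}(U)$ by $q(U)$ on the matrix side (together with $GL(U)\leadsto GQ(U)$ and $k[\mathbb{S}_{n}]\leadsto k[\mathbb{S}_{n}]^{\prime }$).

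The three steps are as follows. First, substitute (\ref{qksnpr}) into the definition
\begin{equation*}
F_{n}=((\Pi V)^{\otimes n}\otimes k[\mathbb{S}_{n}]^{\prime })^{\mathbb{S}_{n}}
\end{equation*}
to obtain
\begin{equation*}
F_{n}\simeq ((\Pi V)^{\otimes n}\otimes \limfunc{Hom}(q(U)^{\otimes n},k)^{GQ(U)})^{\mathbb{S}_{n}}.
\end{equation*}
Second, since the $GQ(U)$-action touches only the matrix indices whereas the $\mathbb{S}_{n}$-action only permutes the $n$ positions, the two actions commute and one can pull $GQ(U)$-invariants past the $\mathbb{S}_{n}$ fixed-point functor:
\begin{equation*}
F_{n}\simeq (((\Pi V)^{\otimes n}\otimes (q(U)^{\ast })^{\otimes n})^{GQ(U)})^{\mathbb{S}_{n}}.
\end{equation*}
Third, interleave the two sequences of tensor factors using the super-symmetry $X\otimes Y\simeq (-1)^{\overline{x}\overline{y}}Y\otimes X$ to obtain
\begin{equation*}
F_{n}\simeq (((\Pi V\otimes q(U)^{\ast })^{\otimes n})^{\mathbb{S}_{n}})^{GQ(U)}=(S^{n}\limfunc{Hom}(q(U),\Pi V))^{GQ(U)}.
\end{equation*}

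The main obstacle, and essentially the only nontrivial content beyond routine bookkeeping, is to verify that the isomorphism (\ref{qksnpr}) of Proposition \ref{ksnpr} is $\mathbb{S}_{n}$-equivariant. On the left of (\ref{qksnpr}) the $\mathbb{S}_{n}$-action is by conjugation of permutations together with the induced action on the determinant factor $Det(Cycle(\sigma ))$, subject to the sign convention $(\sigma ,-\rho _{\sigma })=-(\sigma ,\rho _{\sigma })$; on the right it is the permutation of the $n$ tensor factors of $q(U)^{\otimes n}$ with the usual Koszul signs. Concretely, I would check that permuting cyclic blocks in the basis element $otr(A_{\rho _{1}}\ldots A_{\rho _{r}})\cdot \ldots \cdot otr(A_{\tau _{1}}\ldots A_{\tau _{t}})$ produces exactly the sign dictated by the determinant convention. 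This is natural because $otr$ is an \emph{odd} linear functional, so interchanging two cycle blocks introduces a minus sign matching the antisymmetry of $\rho _{\sigma }\in Symm^{i_{\sigma }}(k^{0|i_{\sigma }})$. Once this equivariance is secured, the remaining Koszul-sign bookkeeping is a direct transcription of the proof of Proposition \ref{propFn}, in line with the announcement in the excerpt that the proofs are completely analogous.
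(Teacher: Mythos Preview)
Your proposal is correct and follows exactly the route the paper intends: the paper gives no separate proof for this proposition, announcing only that the proofs ``are completely analogous to the proofs in the odd scalar product case,'' i.e.\ to Proposition~\ref{propFn}, with $(End_{k}(U),tr,GL(U),k[\mathbb{S}_{n}])$ replaced by $(q(U),otr,GQ(U),k[\mathbb{S}_{n}]^{\prime })$ and $V$ by $\Pi V$. Your identification of the one nontrivial point---that the $\mathbb{S}_{n}$-equivariance of (\ref{qksnpr}) matches the determinant sign convention on $k[\mathbb{S}_{n}]^{\prime }$ precisely because $otr$ is odd---is exactly the extra ingredient needed beyond the $gl$ case, and is consistent with the paper's setup.
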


Identify the symmetric algebra generated by $\limfunc{Hom}(q(U),\Pi V)$ with
polynomial functions on $\limfunc{Hom}(\Pi V,q(U)).$ For an element $%
a_{i}\in \Pi V$ let $A_{i}$ denotes the corresponding $q(U)-$valued linear
function on $\limfunc{Hom}(\Pi V,q(U))$.

\begin{proposition}
The isomorphism (\ref{thetainv}) sends $(a_{\rho _{1}}\ldots a_{\rho
_{r}})^{c}\wedge \ldots \wedge (a_{\tau _{1}}\ldots a_{\tau _{t}})^{c}$ to
the product of odd traces%
\begin{equation*}
otr(A_{\rho _{1}}\ldots A_{\rho _{r}})\cdot \ldots \cdot otr(A_{\tau
_{1}}\ldots A_{\tau _{t}})
\end{equation*}
\end{proposition}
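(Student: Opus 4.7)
The plan is to mirror the argument used for the corresponding lemma in the odd scalar product case, with the $GL(U)$-invariants and the super trace $tr$ replaced by $GQ(U)$-invariants and the odd trace $otr$, and with Proposition \ref{ksnpr} playing the role that the isomorphism (\ref{mukasn}) played there.

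First, I would spell out the composite isomorphism underlying (\ref{thetainv}) in the same style as in the proof of Proposition \ref{propFn}. Using Proposition \ref{ksnpr} to identify $k[\mathbb{S}_n]'$ with $\limfunc{Hom}(q(U)^{\otimes n},k)^{GQ(U)}$, commuting the mutually commuting $GQ(U)$- and $\mathbb{S}_n$-actions, and using the natural identification $\limfunc{Hom}(q(U),k)\otimes \Pi V \simeq \limfunc{Hom}(q(U),\Pi V)$, one obtains the chain
$$F_n = ((\Pi V)^{\otimes n}\otimes k[\mathbb{S}_n]')^{\mathbb{S}_n} \simeq (S^n\limfunc{Hom}(q(U),\Pi V))^{GQ(U)},$$
which realizes (\ref{thetainv}) explicitly.

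Second, I would track the element through this chain. By the definition of the cyclic-word notation for $F_n$, the product $(a_{\rho_1}\ldots a_{\rho_r})^c \wedge \ldots \wedge (a_{\tau_1}\ldots a_{\tau_t})^c$ is represented by the $\mathbb{S}_n$-symmetrization of $a_1\otimes \ldots \otimes a_n \otimes (\sigma,\rho_\sigma)$, where $\sigma$ has cycle decomposition $(\rho_1\ldots\rho_r)\ldots(\tau_1\ldots\tau_t)$ and $\rho_\sigma$ is the orientation on the cycle set determined by that listing. By the explicit description of (\ref{qksnpr}) given in the proof of Proposition \ref{ksnpr}, the basis element $(\sigma,\rho_\sigma)$ corresponds precisely to the $GQ(U)$-invariant functional $B_1\otimes \ldots \otimes B_n \mapsto otr(B_{\rho_1}\ldots B_{\rho_r})\cdots otr(B_{\tau_1}\ldots B_{\tau_t})$ on $q(U)^{\otimes n}$, the cycles listed in the order dictated by $\rho_\sigma$. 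Pairing this functional with $a_1\otimes\ldots\otimes a_n$ and re-expressing via the coordinate generators $A_i$ of the symmetric algebra on $\limfunc{Hom}(q(U),\Pi V)$ produces exactly the product of odd traces in the statement.

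Third, the main obstacle is sign-matching. Because $otr$ is odd, each factor $otr(A_{\rho_1}\ldots A_{\rho_r})$ carries an extra odd parity in the $\mathbb{Z}/2\mathbb{Z}$-graded symmetric algebra $S^{\bullet}\limfunc{Hom}(q(U),\Pi V)$, so the product of $i_\sigma$ odd-trace factors is intrinsically antisymmetric under permutation of the factors. I need to check that this antisymmetry matches exactly the rule $(\sigma,-\rho_\sigma) = -(\sigma,\rho_\sigma)$ built into the definition of $k[\mathbb{S}_n]'$, so that a change in the listed order of cycles on one side is compensated by the sign of reordering odd-trace factors on the other. I would verify this in the two-cycle case by direct Koszul-sign bookkeeping of the kind used in the proof of Theorem \ref{deltamx}, reducing to the purely even case by adjoining formal odd parameters to all odd generators; the general case then follows by induction on $i_\sigma$, and the $\mathbb{S}_n$-equivariance of both sides (together with the averaging identification of invariants and coinvariants) closes the argument.
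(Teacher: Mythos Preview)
Your proposal is correct and follows the same route the paper takes: the paper offers no separate proof here, stating only that ``the proofs are completely analogous to the proofs in the odd scalar product case,'' and that odd-case lemma is proved exactly by unwinding the definition of $\mu_F$ and regrouping indices into trace factors, which is what your steps one and two do with $otr$ in place of $tr$ and Proposition~\ref{ksnpr} in place of (\ref{mukasn}). Your third step, verifying that the antisymmetry of a product of odd-parity factors $otr(\cdots)$ matches the determinant twist $(\sigma,-\rho_\sigma)=-(\sigma,\rho_\sigma)$ in $k[\mathbb{S}_n]'$, is precisely the one genuinely new bookkeeping point in the even case that the paper's blanket ``completely analogous'' leaves implicit.
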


\begin{theorem}
The operator $\Delta $ defined on the $GQ(U)-$invariant subspace 
\begin{equation*}
\Delta :\left( S^{n}\limfunc{Hom}(q(U),\Pi V)\right) ^{GQ(U)}\rightarrow
\left( S^{n-2}\limfunc{Hom}(q(U),\Pi V)\right) ^{GQ(U)},
\end{equation*}%
where $\dim _{k}U=(N|N)$ is sufficiently big ($N\geq n$), coincides with the
differential $\Delta :$ $F_{n}\rightarrow F_{n-2}$ from \cite{B1},\cite{B2}.
\end{theorem}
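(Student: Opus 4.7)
The plan is to imitate the argument of Theorem \ref{deltamx} step by step, making the natural substitutions $gl(N|N)\to q(U)$ and $tr\to otr$, and tracking the Koszul signs appropriate to the new setting. Since $\Delta$ is a second-order differential operator with respect to the symmetric multiplication, it suffices to verify agreement on products of two cyclic words; by the preceding proposition these correspond under the matrix isomorphism to products of two odd traces
$$otr(A_{\rho_1}\cdots A_{\rho_r})\cdot otr(A_{\tau_1}\cdots A_{\tau_t}).$$

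First I would fix a basis $\{\Gamma^\beta_\alpha\}$ of $q(U)$ and a basis $\{a_\nu\}$ of $\Pi V$, yielding generators $A^\beta_{\nu,\alpha}$ of the polynomial algebra on $\limfunc{Hom}(\Pi V,q(U))$. The BV operator takes the schematic matrix form
$$\Delta \;=\; \tfrac12 \sum (-1)^{\varepsilon_K}\, \widehat{l}^{\,-1}\,\frac{\partial^2}{\partial A^\beta_{\nu,\alpha}\,\partial A^\delta_{\kappa,\gamma}},$$
with $\widehat{l}^{\,-1} = otr^{-1}\otimes l^{-1}$ the inverse of the odd symmetric scalar product on $\limfunc{Hom}(q(U),\Pi V)$. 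Expanding $\Delta$ on the product of two odd traces above and rearranging the contractions so that matching upper/lower matrix indices become adjacent --- exactly as in the proof of Theorem \ref{deltamx} --- produces three classes of terms according to whether the two derivatives hit (a) one factor each, (b) both the first odd trace at non-adjacent positions $p+1<q$, or (c) both the second at non-adjacent positions. These should match term-by-term the three summands of the dissection-gluing formula defining $\Delta$ on $F_n$ in the even scalar product case, in precise analogy with (\ref{deltas}).

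The one substantive novelty compared with Theorem \ref{deltamx} is the vanishing of the degenerate sub-case where the two contracted generators sit adjacently in the same cycle ($q=p+1$). In the odd scalar product case that configuration produced a factor $tr(\mathrm{Id}_U)$, which vanished because $\dim U_0 = \dim U_1$. Here the analogous factor is $otr(\mathrm{Id}_U) = \tfrac12\,tr(p)$, which vanishes \emph{for every} $N$ because $p$ is an odd endomorphism of $U$ and the super-trace of any odd operator is automatically zero; consequently the hypothesis $N\geq n$ enters only through the invariant-theoretic identification (\ref{qksnpr}). The main obstacle I expect is therefore purely a matter of sign bookkeeping: verifying that the Koszul signs produced by the oddness of $\widehat{l}$ and by the $Det(Cycle(\sigma))$ factor entering the twisted group algebra $k[\mathbb{S}_n]'$ combine exactly to the signs of the abstract BV operator from \cite{B1}, \cite{B2} on $F_n = ((\Pi V)^{\otimes n}\otimes k[\mathbb{S}_n]')^{\mathbb{S}_n}$, which as usual I would check by introducing auxiliary odd parameters and reducing to the purely even commutative case.
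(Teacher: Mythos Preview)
Your proposal is correct and follows precisely the approach the paper intends: the paper gives no separate argument here, stating only that ``the proofs are completely analogous to the proofs in the odd scalar product case,'' and your write-up is exactly that analogy, with the substitutions $gl(N|N)\to q(U)$, $tr\to otr$ and the three-term expansion of Theorem~\ref{deltamx} carried over verbatim. Your observation that the degenerate $q=p+1$ contribution vanishes via $otr(\mathrm{Id})=\tfrac12 tr(p)=0$ (since $p$ is odd) is the correct $q(N)$ counterpart of the condition $\dim U_0=\dim U_1$ used in the $gl(N|N)$ case, and the sign bookkeeping you flag is indeed the only remaining routine verification.
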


\begin{proposition}
\bigskip The odd symplectic bracket on the $GQ(U)-$invariant subspaces%
\begin{multline*}
(S^{n}\limfunc{Hom}(q(U),\Pi V))^{GQ(U)}\otimes (S^{n^{\prime }}\limfunc{Hom}%
(q(U),\Pi V))^{GQ(U)}\rightarrow \\
\rightarrow S^{n+n^{\prime }-2}\limfunc{Hom}(q(U),\Pi V)^{GQ(U)}
\end{multline*}%
coincides with the standard odd symplectic bracket (see references in
loc.cit): 
\begin{equation*}
F_{n}\otimes F_{n^{\prime }}\rightarrow F_{n+n^{\prime }-2}.
\end{equation*}
\end{proposition}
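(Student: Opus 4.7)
The plan is to deduce this proposition directly from the preceding theorem together with the fact that $\mu_F$ is an algebra isomorphism, via the universal BV identity that recovers the odd Poisson bracket from the second-order operator $\Delta$.

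First I would record that on any Batalin--Vilkovisky algebra $(A,\cdot,\Delta)$ the odd bracket is reconstructed from the product and $\Delta$ by the Koszul-type formula
\[
\{f,g\} \;=\; (-1)^{\overline{f}}\Delta(f\cdot g) - (-1)^{\overline{f}}(\Delta f)\cdot g - f\cdot\Delta g.
\]
On the $F$-side this formula produces exactly the bracket of \cite{B1},\cite{B2}; on the matrix side it produces the standard odd Poisson bracket on $S^{\bullet}\limfunc{Hom}(q(U),\Pi V)$ attached to the constant odd symplectic structure, which restricts to the $GQ(U)$-invariant subalgebra because $GQ(U)$ preserves the odd scalar product $\widehat{l}$.

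Next I would observe that $\mu_F$ is a morphism of commutative superalgebras: by the preceding proposition it sends a wedge product of cyclic words to the corresponding product of odd traces, so multiplicativity holds on a system of generators of $F$ and hence on all of $F$. By the preceding theorem it also intertwines the two second-order operators, $\mu_F\circ\Delta = \Delta\circ\mu_F$, in every symmetric degree within the stable range. Substituting $f\in F_n$, $g\in F_{n'}$ into the bracket identity on both sides and applying $\mu_F$, the multiplicativity of $\mu_F$ together with the intertwining of $\Delta$ yields
\[
\mu_F(\{f,g\}_F) \;=\; \{\mu_F(f),\mu_F(g)\},
\]
which is the desired agreement of brackets.

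The main (and essentially only) technical point is bookkeeping the range of $N$. Since the bracket maps $F_n\otimes F_{n'}$ into $F_{n+n'-2}$, I need the identification $F_m\simeq (S^m\limfunc{Hom}(q(U),\Pi V))^{GQ(U)}$ and the $\Delta$-intertwining to hold simultaneously for $m\in\{n,n',n+n'-2,n+n'\}$; all four hold as soon as $N\geq n+n'$, matching the "sufficiently big $N$" hypothesis and paralleling the analogous condition of Proposition \ref{propbrackt} in the odd scalar product case.
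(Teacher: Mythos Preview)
Your proposal is correct and follows essentially the same approach as the paper: the paper deduces this proposition (and its odd-scalar-product analogue, Proposition~\ref{propbrackt}) immediately from the fact that $\mu_F$ respects the multiplicative structure together with the preceding theorem identifying the $\Delta$-operators, using the standard BV identity that recovers the bracket from $\Delta$ and the product. You have simply written out in detail what the paper compresses into one line, including the explicit Koszul-type formula and the bookkeeping on the stable range of $N$.
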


\bigskip Next in order to calculate the cohomology of $\Delta $ on $S(%
\limfunc{Hom}(q(U),\Pi V)$ I identify it with the de Rham complex of the
affine space $(q(U)\otimes \Pi V)_{0}$. For $U$ with a fixed basis $U\simeq
k^{N}$ I use the standard notation $q(N)$ .

\begin{proposition}
The matrix Batalin-Vilkovisky complex $(S(\limfunc{Hom}(q(N),\Pi V)),\Delta
) $ is naturally isomorphic to the de Rham complex of the affine space $%
(q(N)\otimes \Pi V)_{0}$
\end{proposition}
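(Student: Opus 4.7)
The plan is to mimic the proof of the analogous statement for $(S(gl(N|N) \otimes V), \Delta)$ given earlier: on any affine superspace carrying a constant odd symplectic structure, the standard odd Fourier transform identifies the BV complex of its polynomial functions with the algebraic de Rham complex of its underlying even part. All that remains is to spell this out concretely for the odd symplectic structure on $q(N) \otimes \Pi V$ coming from $otr$ and the even scalar product $l$.

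First I would set up Darboux coordinates. Pick a basis $\{x_i\} \subset V_0$, $\{x_{\pi i}\} \subset V_1$ in which the even scalar product takes the standard form $l(x_i, x_j) = \delta_{ij}$, $l(x_{\pi i}, x_{\pi j}) = \delta_{ij}$, and, using the block description (\ref{GXdzeta}), fix a linear basis $\{X_\alpha^\beta, Y_\alpha^\beta\}$ of $q(N)$ with $X_\alpha^\beta \in q(N)_0$, $Y_\alpha^\beta \in q(N)_1$, in which the odd scalar product reads $\langle G, G' \rangle = tr(XY') + tr(YX')$. Tensoring the two pairings yields explicit Darboux pairs $(q_I, p_I)$ on $q(N) \otimes \Pi V$, with the even coordinates $q_I$ generating the algebra of polynomial functions on $(q(N) \otimes \Pi V)_0$ and the odd coordinates $p_I$ dual to them.

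Second, I would define the odd Fourier transform exactly as in the $gl(N|N)$ case: let
\begin{equation*}
\Omega = \prod_{I} dq_I
\end{equation*}
be the canonical constant volume form on $(q(N) \otimes \Pi V)_0$, and send a product of odd generators $\prod_k p_{I_k}$, multiplied by an arbitrary polynomial in the $q_J$, to the differential form $\bigl(\prod_k i(\partial/\partial q_{I_k})\bigr)\Omega$. This is a grading-shifting linear isomorphism $S(\limfunc{Hom}(q(N), \Pi V)) \simeq \Omega^{\ast}((q(N) \otimes \Pi V)_0)$.

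Finally, I would verify that under this isomorphism $\Delta$ corresponds to $d_{DR}$. In Darboux coordinates $\Delta$ takes the standard form $\sum_I \pm \partial_{q_I} \partial_{p_I}$, which is the $\Omega$-divergence operator on polyvector fields; the Cartan-type identity $d_{DR}(i(\xi)\Omega) = i(\mathrm{div}_\Omega\, \xi)\,\Omega$ then furnishes the required intertwining. The main step demanding care is Koszul-sign bookkeeping: because $otr$ is off-block-diagonal in the $(X,Y)$ decomposition of $q(N)$, the Darboux partner of a coordinate $X_\alpha^\beta \otimes x_i$ lives in the $Y$-block tensored with a specific component of $\Pi V$, and one must check that the signs produced by this identification agree with those in the explicit coordinate expression of $\Delta$ written in terms of $otr$ and $l$. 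This verification is entirely parallel to the sign check made inside the proof of Theorem \ref{deltamx}, so no new difficulty arises beyond careful tracking of parities.
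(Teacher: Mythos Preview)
Your proposal is correct and follows essentially the same route as the paper: both set up explicit coordinates on $q(N)\otimes\Pi V$ adapted to the block decomposition of $q(N)$ and the scalar product $l$, and then invoke the standard odd Fourier transform sending products of odd generators to contractions of the constant volume form $\Omega$. The paper's version is terser (it simply writes down the transform on monomials and appeals to ``the standard isomorphism between polyvector fields and forms''), whereas you add the conceptual gloss about Darboux pairs and the divergence identity $d_{DR}(i(\xi)\Omega)=i(\mathrm{div}_\Omega\xi)\Omega$; this is the same argument with slightly more commentary.
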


\begin{proof}
As above I consider $S(\limfunc{Hom}(q(N),\Pi V))$ as the algebra of
polynomial functions on the affine space $\limfunc{Hom}(\Pi V,q(N))$. Let $%
v_{j}\in \Pi V$ be a basis and let $(X_{j,\alpha }^{\beta },Y_{j,\alpha
}^{\beta })$ denotes the corresponding natural basis of linear functions on $%
\limfunc{Hom}(\Pi V,q(N))$. Denote by $\widetilde{v}_{j}$ the dual basis
satisfying $l(\pi y_{j},\pi \widetilde{y}_{j^{\prime }})=\delta _{jj^{\prime
}}$ and via $(\widetilde{X}_{j,\alpha }^{\beta },\widetilde{Y}_{j,\alpha
}^{\beta })$ the corresponding basis of linear functions on $\limfunc{Hom}%
(\Pi V,q(N))$. The standard isomorphism between polyvector fields and forms
sends in this case the product of a set of odd elements $X_{j,\alpha
}^{\beta }$, $\overline{j}=1$, $Y_{k,\varepsilon }^{\gamma }$, $\overline{k}%
=0$, to the differential form 
\begin{equation}
\tprod X_{j,\alpha }^{\beta }\tprod Y_{k,\varepsilon }^{\gamma
}\leftrightarrow \left( \tprod i(\frac{\partial }{\partial \widetilde{Y}%
_{j,\beta }^{\alpha }})\tprod i(\frac{\partial }{\partial \widetilde{X}%
_{k,\varepsilon }^{\gamma }})\right) \Omega  \label{fourodd2}
\end{equation}%
where 
\begin{equation*}
\Omega =\tprod_{\overline{j}=0,\alpha ,\beta }d(X_{j,\alpha }^{\beta
})\tprod_{\overline{j}=1,\alpha ,\beta }d(Y_{j,\alpha }^{\beta })
\end{equation*}%
is the constant volume form on $(q(N)\otimes \Pi V)_{0}$ and $i(v)$ is the
standard contraction with the vector field $v$.
\end{proof}

\begin{theorem}
The cohomology of the Batalin-Vilkovisky differential acting on $F$ are
trivial: $H^{\ast }(\oplus _{n=0}^{\infty }F_{n},\Delta )=0$ .
\end{theorem}

\begin{proof}
The proof is parallel to the case of odd scalar product above and follows
from the identification 
\begin{eqnarray*}
F_{n} &\simeq &(\tbigoplus_{i=0}^{n}\Omega _{(q(N)\otimes \Pi
V)_{0}}^{rN^{2}-i,n-i})^{GQ(N)}, \\
d_{DR} &:&\Omega _{(q(N)\otimes \Pi V)_{0}}^{j,l}\rightarrow \Omega
_{(q(N)\otimes \Pi V)_{0}}^{j+1,l-1}
\end{eqnarray*}%
where $\Omega _{(q(N)\otimes \Pi V)_{0}}^{j,l}$ is the de Rham complex with
the extra grading by the polynomial degree, $N>n+2$ and $r=\dim _{k}V$. The
cohomology of the de Rham differential are trivial on every bi-graded piece $%
\Omega _{(q(N)\otimes \Pi V)_{0}}^{rN^{2}-i,n-i}$ and the standard
arguments, see e.g.\cite{GW}, \cite{L}, imply that the cohomology are
concentrated on the $GQ(N)$-invariant subspace. It follows that $\ker \Delta
|_{F_{n}}=$ $\func{im}\Delta |_{F_{n++2}}$.
\end{proof}

\bigskip A version of Morita equivalence holds also.

\begin{theorem}
Let 
\begin{equation*}
(\widetilde{V},\widetilde{l})=(V,l)\otimes (gl(k|k^{\prime }),tr)
\end{equation*}%
and let $M:F(V)\rightarrow F(\widetilde{V})$ be the map of the commutative
algebras defined on generators by%
\begin{equation*}
(a_{\rho _{1}},\ldots ,a_{\rho _{r}})^{c}\rightarrow tr(A_{\rho _{1}},\ldots
,A_{\rho _{r}})^{c}
\end{equation*}%
Then for any solution $S$ to the noncommutative Batalin-Vilkovisky equation
in $F(V)$, $M(S)$ is a solution in $F(\widetilde{V})$. Class of such
solutions is characterized by the extra $gl(k|k^{\prime })-$symmetry.
\end{theorem}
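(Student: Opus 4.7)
The plan is to mirror the proof of the analogous Morita-type theorem established in the odd scalar product case, replacing $(gl(N|N), tr)$ by $(q(N), otr)$ throughout. Everything hinges on the identification of vector spaces with scalar products
\begin{equation*}
(q(N), otr) \otimes (gl(k|k'), tr) \simeq (q(\widetilde{N}), otr), \qquad \widetilde{N} = N(k+k').
\end{equation*}
To verify it, take $U = k^{N|N}$ with odd involution $p$ and $W = k^{k|k'}$. The tensor $U \otimes W$ has dimension $(N(k+k') \mid N(k+k'))$ and carries the odd involution $p \otimes 1_W$. The subalgebra of $End(U \otimes W)$ commuting with $p \otimes 1_W$ is exactly $q(U) \otimes End(W) = q(U \otimes W)$, and
\begin{equation*}
otr_{U \otimes W}(G \otimes H) = \tfrac{1}{2} tr_{U \otimes W}((G \otimes H)(p \otimes 1_W)) = otr_U(G) \cdot tr_W(H),
\end{equation*}
which extends multiplicatively to $\langle G\otimes H, G'\otimes H'\rangle_{q(\widetilde{N})} = \langle G, G'\rangle_{q(N)} \cdot \langle H, H'\rangle_{gl(k|k')}$. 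Hence the affine spaces $q(\widetilde{N}) \otimes \Pi V$ and $q(N) \otimes \Pi \widetilde{V}$ are canonically isomorphic with matching constant odd symplectic structures.

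Next I would invoke the even-scalar-product analog of Theorem~\ref{deltamx} just proved in this section: $\mu_{F(V)}$ intertwines the BV operator $\Delta$ on $F(V)$ with the $GQ(\widetilde{N})$-invariant BV operator on $S(\limfunc{Hom}(q(\widetilde{N}), \Pi V))$, and similarly for $F(\widetilde{V})$ relative to $GQ(N)$ on $S(\limfunc{Hom}(q(N), \Pi \widetilde{V}))$. Under the affine isomorphism above, a product of $q(\widetilde{N})$-odd traces $otr(A_{\rho_1}\cdots A_{\rho_r})$ expands, via the multiplicativity of $otr \otimes tr$, into a product of $otr_{q(N)}$-traces of monomials whose $gl(k|k')$-slots are contracted by $tr$. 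By the definition of $M$ this is exactly $\mu_{F(\widetilde{V})}(M((a_{\rho_1} \cdots a_{\rho_r})^c))$, so $\mu_{F(V)}(S) = \mu_{F(\widetilde{V})}(M(S))$ as polynomials on the identified affine space. Because the identification preserves the odd symplectic form it preserves both $\Delta$ and the odd Poisson bracket; hence $M(S)$ satisfies the noncommutative BV master equation iff $S$ does.

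For the characterisation, I would note that the $GL(k|k')$-adjoint action on the second tensor factor of $\widetilde{V}$ preserves $\widetilde{l}$ and commutes with $GQ(N)$ on $q(N) \otimes \Pi \widetilde{V}$; under the isomorphism above it corresponds to the block-diagonal embedding $GQ(N) \times GL(k|k') \hookrightarrow GQ(\widetilde{N})$. By the classical first fundamental theorem for $GL(k|k')$ (the super-version underlying (\ref{mukasn}), now applied to the $gl(k|k')$-slots), the $GL(k|k')$-invariants inside the $GQ(N)$-invariant polynomial functions on $q(N) \otimes \Pi \widetilde{V}$ are spanned exactly by products of $otr_{q(N)}$'s whose $gl(k|k')$-matrix arguments are contracted into ordinary matrix traces, which is precisely the image of $\mu_{F(\widetilde{V})} \circ M$. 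This identifies the $gl(k|k')$-invariant solutions in $F(\widetilde{V})$ with $M(F(V))$.

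The main obstacle is the first step---carefully checking that the odd involution, the odd trace and the induced odd symplectic form all behave correctly under the tensor-product construction, since a misplaced sign or an extra factor of $1/2$ would break the identification of BV structures. Once this bookkeeping is done, the rest is a transcription of the odd-scalar-product argument together with an invocation of the super first fundamental theorem for $GL(k|k')$.
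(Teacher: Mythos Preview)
Your proposal is correct and follows essentially the same route as the paper: both proofs rest on the tensor-product isomorphism $(q(N),otr)\otimes(gl(k|k'),tr)\simeq(q(\widetilde N),otr)$ with $\widetilde N=N(k+k')$, deduce $q(\widetilde N)\otimes\Pi V\simeq q(N)\otimes\Pi\widetilde V$ as odd symplectic affine spaces, and then identify $\mu_{F(V)}(S)$ with $\mu_{F(\widetilde V)}(M(S))$ under this isomorphism to transport the BV master equation. You supply more detail than the paper on two points---the explicit realisation of $q(\widetilde N)$ via the involution $p\otimes 1_W$ with the multiplicativity $otr\otimes tr$, and the characterisation step via the super first fundamental theorem for $GL(k|k')$---whereas the paper's proof simply asserts the isomorphism ``follows directly from the definition'' and omits any argument for the characterisation clause; so your write-up is a strict expansion of the paper's sketch rather than a different argument.
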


\begin{proof}
This follows from the isomorphism of vector spaces with scalar products 
\begin{equation*}
(q(N),otr)\otimes (gl(k|k^{\prime }),tr)\simeq (q(\widetilde{N}),otr)
\end{equation*}%
where $\widetilde{N}=N(k+k^{\prime })$, which follows directly from the
definition of $q(N)$ and $otr$. Therefore%
\begin{equation*}
q(\widetilde{N})\otimes \Pi V\simeq q(N)\otimes \Pi \widetilde{V}
\end{equation*}%
as affine spaces with constant BV structures. The $GQ(\widetilde{N})-$%
invariant function $\mu _{F(V)}(S)$ on $q(\widetilde{N})\otimes \Pi V$,
coincides by definition with $GQ(N)-$invariant function on $q(N)\otimes \Pi 
\widetilde{V}$ corresponding to $\mu _{F(\widetilde{V})}M(S)$. Hence $\mu
_{F(\widetilde{V})}M(S),$and therefore $M(S)$, both satisfy the
Batalin-Vilkovisky quantum master equations .
\end{proof}

\subsection{\protect\bigskip Super Morita equivalence.}

It is remarkable that thanks to the supersymmetry there exists a
superversion of Morita equivalence.

\begin{theorem}
Let $(V,l)$ be a vector space with odd (even) scalar product, define the
vector space with \emph{even (}respectively,\emph{\ odd)} scalar product 
\begin{equation*}
(\widetilde{V},\widetilde{l})=(V,l)\otimes (q(1),otr)
\end{equation*}%
and let $M:F(V)\rightarrow F(\widetilde{V})$ be the map of the commutative
algebras defined on generators by%
\begin{equation*}
(a_{\rho _{1}},\ldots ,a_{\rho _{r}})^{c}\rightarrow otr(A_{\rho
_{1}},\ldots ,A_{\rho _{r}})^{c},
\end{equation*}%
Then for any solution $S$ to the noncommutative Batalin-Vilkovisky equation
in $F(V)$, $M(S)$ is a solution in $F(\widetilde{V})$. These solutions have
extra $GQ(1)-$symmetry.
\end{theorem}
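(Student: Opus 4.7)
The plan is to follow the template of the two preceding Morita-type theorems, replacing the tensor factor $(gl(k|k'), tr)$ by $(q(1), otr)$. The decisive new feature is that $otr$ is an \emph{odd} invariant pairing, so tensoring with $(q(1), otr)$ flips the parity of the scalar product on $V$. This is exactly why the super Morita map exchanges the odd and even scalar product cases and, correspondingly, exchanges the $gl(N|N)$ and $q(N)$ matrix models used to describe them in sections 2 and 5.

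First I would establish two isomorphisms of super vector spaces with invariant scalar products:
\begin{align*}
(gl(N|N), tr) \otimes (q(1), otr) &\simeq (q(2N), otr), \\
(q(N), otr) \otimes (q(1), otr) &\simeq (gl(N|N), tr).
\end{align*}
The dimensions $(2N^2|2N^2)\otimes(1|1) = (4N^2|4N^2)$ match $q(2N)$ in the first case, and $(N^2|N^2)\otimes(1|1) = (2N^2|2N^2)$ match $gl(N|N)$ in the second; the tensor of two scalar products of parities $p$ and $q$ has parity $p+q$, so the target pairings have the correct parity. Verifying these identifications explicitly, using the description of $q(U)$ as operators commuting with the odd involution $p$ and of $q(1) = Cl(1)$, is the main computational step. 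From either of these I obtain an isomorphism of affine spaces with constant BV structure, e.g.
$$q(\widetilde N) \otimes \Pi \widetilde V \;=\; q(\widetilde N) \otimes q(1) \otimes \Pi V \;\simeq\; gl(N|N) \otimes \Pi V$$
in the odd scalar product case, and symmetrically in the even case.

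By definition of $M$ on generators, the $GQ(\widetilde N)$-invariant function $\mu_{F(\widetilde V)}M(S)$ on the left-hand side coincides with the $GL(N|N)$-invariant function $\mu_{F(V)}(S)$ on the right: an odd trace $otr(A_{\rho_1},\ldots,A_{\rho_r})^c$ of matrices with entries in $\widetilde V = V \otimes q(1)$ reduces, after the identification of the scalar-producted spaces above, to $Tr(A_{\rho_1}\ldots A_{\rho_r})$ on $gl(N|N)\otimes V$. Since the BV differentials on the two matrix affine spaces correspond under this isomorphism (by Theorem~\ref{deltamx} and its even-scalar-product counterpart), the quantum master equation for $\mu_{F(V)}(S)$ is equivalent to the one for $\mu_{F(\widetilde V)}M(S)$, and hence for $M(S)$ itself. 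The characterization by extra $GQ(1)$-symmetry follows from Proposition~\ref{ksnpr} applied in the $q(1)$-tensor slots: the $GQ(1)$-invariants in $q(1)^{\otimes n}$ are spanned by products of odd traces, and the image of $M$ is precisely the subspace generated by such products. The main obstacle is the first step: checking the signs and parities in the isomorphism of scalar-producted super vector spaces, where the interplay of $\Pi$, the odd involution $p$, and the Clifford generator of $q(1)$ requires careful Koszul bookkeeping.
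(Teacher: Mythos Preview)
Your proposal is correct and follows essentially the same approach as the paper: the key step is the isomorphism of vector spaces with scalar products $(q(N),otr)\otimes(q(1),otr)\simeq(gl(N|N),tr)$ (and you also spell out the companion isomorphism $(gl(N|N),tr)\otimes(q(1),otr)\simeq(q(2N),otr)$ for the other parity, which the paper leaves as ``analogous''), yielding an identification of the two matrix BV affine spaces under which $\mu_{F(V)}(S)$ and $\mu_{F(\widetilde V)}M(S)$ coincide. Your added justification of the $GQ(1)$-characterization via Proposition~\ref{ksnpr} is a reasonable elaboration of what the paper leaves implicit.
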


\begin{proof}
Let $(V,l)$ be a vector space with odd scalar product, the other case is
analogous. The statement follows from the isomorphism of vector spaces with
scalar products 
\begin{equation*}
(q(N),otr)\otimes (q(1),otr)\simeq (gl(N|N),tr)
\end{equation*}%
which follows from the definition of $q(N)$ and $otr$. Therefore%
\begin{equation}
gl(N|N)\otimes \Pi V\simeq q(N)\otimes \Pi \widetilde{V}  \label{glnqn}
\end{equation}%
as affine spaces with constant BV structures. The $GL(N|N)-$invariant
function $\mu _{F(V)}(S)$ on $gl(N|N)\otimes \Pi V$, coincides by definition
with $GQ(N)-$invariant function $\mu _{F(\widetilde{V})}M(S)$ after
identification (\ref{glnqn}). Hence $\mu _{F(\widetilde{V})}M(S),$and
therefore, $M(S)$, both satisfy the Batalin-Vilkovisky quantum master
equations .
\end{proof}

\subsection{Twisted modular operad on $k[\mathbb{S}_{n}]^{\prime }$.}

Using (\ref{qksnpr}) it is straightforward to prove the following statement

\begin{proposition}
The twisted modular operad structure on the collection of spaces $\{k[%
\mathbb{S}_{n}]^{\prime }\}$ described in \cite{B1} is isomorpic to the
stable part, as $N\rightarrow \infty $, of the $GQ-$invariants suboperad of
the standard\emph{\ tensor} twisted modular operad, based on the vector
space with odd scalar product $(q(N),otr)$%
\begin{equation*}
\mathcal{E}[q(N)]((n))=q(N)^{\otimes n}
\end{equation*}%
with contractions along graphs defined via contractions with the two-tensor
corresponding to the odd trace.
\end{proposition}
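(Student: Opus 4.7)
The plan is to lift the calculation underlying the preceding theorems from the symmetric algebra to the raw tensor algebra. Proposition \ref{ksnpr} already identifies the components: for $N>n$, $\mathcal{E}[q(N)]((n))^{GQ(N)} = (q(N)^{\otimes n})^{GQ(N)}$, paired to scalars via the odd trace, is spanned by the products of odd traces
\begin{equation*}
otr(A_{\rho_1}\ldots A_{\rho_r})\cdot\ldots\cdot otr(A_{\tau_1}\ldots A_{\tau_t})
\end{equation*}
indexed by a permutation $\sigma\in\mathbb{S}_n$ with cycle decomposition $(\rho_1\ldots\rho_r)\ldots(\tau_1\ldots\tau_t)$; the sign ambiguity $(\sigma,\rho_\sigma)=-(\sigma,-\rho_\sigma)$ appears precisely because each $otr$ factor is odd, so an odd reordering of the cycle factors changes sign. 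Taking the direct limit $N\to\infty$ therefore gives a canonical isomorphism of $\mathbb{S}_n$-modules $\mathcal{E}[q(N)]^{GL}((n))\simeq k[\mathbb{S}_n]'$.

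Next I would verify that the basic contraction operators of the two modular operads agree under this identification. The contractions in $\mathcal{E}[q(N)]$ are defined using the two-tensor $\sum_a e_a\otimes e^a\in q(N)\otimes q(N)$ dual to the odd symmetric pairing $\langle G,G'\rangle=otr(GG')$. So, given an invariant expressed as a product of odd traces, contracting two legs labelled $f,f'$ amounts to inserting this tensor and re-evaluating the odd traces. Three cases must be checked, in direct parallel with the computation in the proof of Theorem \ref{deltamx} and with formulas (\ref{rofrof}), (\ref{roff}):
\begin{itemize}
\item If $f$ and $f'$ lie in distinct cycles, the contraction merges the two traces into one, which on the combinatorial side corresponds to concatenating the two cycles of $\sigma$ into one, with an induced (odd) sign from the loss of one $otr$ factor governing the orientation $\rho_\sigma$ of the cycle set.
\item If $f$ and $f'$ lie in the same cycle but are non-adjacent ($p+1<q$), the contraction splits the single trace into two; combinatorially, the cycle $(\rho_1\ldots f\ldots f'\ldots \rho_r)$ breaks into two cycles, and a new $otr$ factor (hence an extra generator of the cycle orientation) appears.
\item If $f$ and $f'$ are adjacent in the same cycle, the contraction produces a factor $otr(\mathrm{id})=\tfrac{1}{2}\mathrm{tr}(p)$. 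Because $p$ is odd with $p^2=1$ (and this forces $\dim U_0=\dim U_1$), one has $\mathrm{tr}(p)=0$, so this vanishes—reproducing exactly rule (\ref{roff}) and its analogue in the twisted setting.
\end{itemize}
Each of these three cases is established by rearranging the summands so that paired upper/lower matrix indices become adjacent, exactly as in the proof of Theorem \ref{deltamx}.

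The main obstacle is sign bookkeeping: the traces are now odd, so both the invariants and the two-tensor carry nontrivial parity, and the orientation datum $\rho_\sigma\in Det(Cycle(\sigma))$ must be tracked through every contraction. The cleanest way is the general Koszul trick mentioned in the proof of Theorem \ref{deltamx}: tensor every odd $A_i$ with an associated formal odd parameter, reduce the sign check to the totally even case where it is combinatorial, and then restore parities. Once signs are aligned in the three cases above, comparison with the explicit description of the twisted modular operad structure on $\{k[\mathbb{S}_n]'\}$ from \cite{B1} is immediate, and passing to the stable limit $N\to\infty$ (where Proposition \ref{ksnpr} guarantees that the $GQ$-invariants are exhausted by products of odd traces and are linearly independent) finishes the proof.
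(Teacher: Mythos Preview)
Your proposal is correct and follows essentially the same approach as the paper. The paper gives no detailed proof for this proposition---it merely asserts that the statement is straightforward from the identification (\ref{qksnpr})---and your argument is precisely the natural fleshing-out of that claim: identify components via Proposition~\ref{ksnpr}, then verify the contraction maps case by case in direct parallel with the proof of Theorem~\ref{deltamx} (and the untwisted analogue for $\mathcal{S}\simeq\mathcal{E}[End_k]^{GL}$, whose proof in the paper says exactly ``the calculations are very similar to the calculations as in the proof of the theorem~\ref{deltamx}; in particular the condition (\ref{uevenuodd}) implies (\ref{roff})'').

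One small remark on the adjacent-case vanishing: your heuristic ``$otr(\mathrm{id})=\tfrac12\,tr(p)=0$'' is morally right but slightly telegraphic. What actually happens when you contract adjacent slots with the Casimir of the odd form is that the inserted element $\sum_a e_a e^a\in q(N)$ is, up to normalization, proportional to the odd generator $p$ itself; the remaining odd trace then reduces (after commuting $p$ past the $A_{\rho_j}\in q(N)$) to an ordinary supertrace on $q(N)$, which vanishes identically. This is the $q(N)$ analogue of the $Tr(\mathrm{Id})=0$ mechanism in Theorem~\ref{deltamx}, and your reference to $\dim U_0=\dim U_1$ captures the same supersymmetry. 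The rest of your sign bookkeeping, including the use of the Koszul trick and the tracking of the orientation datum $\rho_\sigma$ through the gain or loss of an $otr$ factor, is exactly what is needed.
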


\section{\protect\bigskip Equivariant differential and localization.\label%
{secEqDif}}

In the previous sections I have identified the non-commutative
Batalin-Vilkovisky operator $\Delta $, acting on the basic spaces $F$, with
de Rham differntial acting on $GL(N|N)$ and $GQ(N)$ invariant subspaces in
the de Rham complexes of affine spaces. The invariant suspaces of de Rham
complexes appear also in the definition of the equivariant differential. So
it is natural to look for the analogue of the remaining part of the
equivariant differential acting on $F$. I'll treat both even and odd cases
simultaneously in this and the next sections. Denote by $g$ and $G$ in the
odd scalar product case the super Lie algebra $gl(N|N)$ and $GL(N|N)$, and
in the even scalar product case the super Lie algebra $q(N)$ and the super
group $GQ(N)$. Denote by $\left\langle \cdot ,\cdot \right\rangle $ the odd
symplectic structure on the affine space $g\otimes \Pi V$.

\begin{proposition}
The adjoint action of the super Lie algebra $g$ on the vector space $%
g\otimes \Pi V$ preserves the odd symplectic structure. The Hamiltonian of
the linear vector field corresponding to $\gamma \in g$, is the quadratic
function 
\begin{equation}
S_{2,\gamma }=\left\langle [\gamma ,X],X\right\rangle  \label{qham}
\end{equation}%
It satisfies \newline
\begin{equation}
\Delta S_{2,\gamma }=0  \label{dltasgamma}
\end{equation}
\end{proposition}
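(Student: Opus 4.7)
The plan is to verify the three assertions --- invariance of $\beta$, the Hamiltonian identity, and $\Delta S_{2,\gamma} = 0$ --- in that order.

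For invariance of $\beta$: both the super-trace $tr$ on $gl(N|N)$ and the odd trace $otr$ on $q(N)$ are ad-invariant, by their (graded) cyclic property, and $\beta$ is built from them by tensoring with the scalar product $l$ on $V$. Since the $g$-action on $g \otimes \Pi V$ is trivial on the $V$-factor, the tensor-product scalar product is again $g$-invariant, which is to say that the linear vector field $v_\gamma(X) = [\gamma, X]$ preserves $\beta$.

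For the Hamiltonian identity: since $S_{2,\gamma}$ is quadratic, the graded Leibniz rule gives
\begin{equation*}
dS_{2,\gamma}(Y) = \beta([\gamma, Y], X) + (-1)^{\epsilon_K}\beta([\gamma, X], Y).
\end{equation*}
The odd (graded) symmetry of $\beta$ combined with the ad-invariance just established allows one to rewrite the first term as a multiple of the second, collapsing the sum into a single multiple of $\iota_{v_\gamma}\beta$. Matching Koszul signs by the standard device of introducing auxiliary odd parameters (as in the proof of Theorem~\ref{deltamx}) then confirms that $S_{2,\gamma}$ is, up to the conventional normalization, the Hamiltonian of $v_\gamma$.

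For $\Delta S_{2,\gamma} = 0$: since $\Delta$ is a constant-coefficient second-order operator and $S_{2,\gamma}$ is quadratic, $\Delta S_{2,\gamma}$ is a scalar. A direct computation in Darboux coordinates, using the explicit form of $\Delta$ given in (\ref{deltaAA}) for the odd case and its $q(N)$-analog for the even case, identifies this scalar with the supertrace of $\mathrm{ad}_\gamma \otimes \mathrm{id}_{\Pi V}$ acting on $g \otimes \Pi V$. This factors as $\mathrm{str}_g(\mathrm{ad}_\gamma) \cdot \mathrm{sdim}(\Pi V)$, and $\mathrm{str}_g(\mathrm{ad}_\gamma)$ vanishes in both cases of interest. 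For $g = gl(N|N)$ this is unimodularity: evaluating on the matrix-unit basis gives $\sum_{a,b}(\delta_{ia} - \delta_{ib})(-1)^{\bar a + \bar b}$, which is zero because $\sum_a(-1)^{\bar a} = N - N = 0$. For $g = q(N)$, the decomposition $q(N) = End(U_0) \oplus \Pi End(U_0)$ shows that for even $\gamma$ the adjoint action is the ordinary $gl(N)$-adjoint on both summands (trace zero), so the supertrace is $0 - 0 = 0$, while for odd $\gamma$ the supertrace is automatically zero. Conceptually this is just the statement that $v_\gamma$ preserves the canonical constant Berezinian volume on $g \otimes \Pi V$, and for Hamiltonian vector fields $\Delta H_v$ measures precisely the divergence with respect to that volume.

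The main obstacle is almost entirely Koszul-sign bookkeeping --- in the Hamiltonian identity and in the Darboux-coordinate evaluation of $\Delta S_{2,\gamma}$. The underlying content (ad-invariance of the (odd) trace, and unimodularity of $gl(N|N)$ and $q(N)$) is standard, but maintaining the odd-and-even cases in parallel in the unified notation of the paper requires attention.
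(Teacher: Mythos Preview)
Your proposal is correct and follows essentially the same approach as the paper's own (very terse) proof: the Hamiltonian identity is cited there as a standard formula of symplectic geometry, and $\Delta S_{2,\gamma}=0$ is reduced to $\mathrm{tr}_g(\mathrm{ad}(\gamma))=0$ for the supersymmetric algebras $gl(N|N)$ and $q(N)$. You supply considerably more detail than the paper --- the divergence interpretation of $\Delta$ on Hamiltonians, the factorization through $\mathrm{sdim}(\Pi V)$, and the explicit unimodularity checks --- but the underlying argument is the same.
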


\begin{proof}
The first equation is the standard formula of symplectic geometry and
follows from the definition of the hamiltonian: 
\begin{equation*}
dS_{2,\gamma }(Y)=(i_{ad(\gamma )}\left\langle \cdot ,\cdot \right\rangle
)(Y)
\end{equation*}%
for any vector field $Y$. The second formula is equivalent to 
\begin{equation*}
trace(ad(\gamma )|_{g})=0
\end{equation*}%
satisfied for any $\gamma $ from the algebras $gl(N|N)$ and $q(N)$.
\end{proof}

\begin{proposition}
\label{Ladgamma}The Lie derivative by the linear vector fields $L_{ad(\gamma
)}$ satisfies on $\mathcal{O}(g\otimes \Pi V)$ the Cartan homotopy formula,%
\begin{equation*}
L_{ad(\gamma )}=[\Delta ,S_{2,\gamma }]
\end{equation*}%
where, slightly abusing notation, I denote by $S_{2,\gamma }$ the
multiplication by the quadratic hamiltonian (\ref{qham}).
\end{proposition}

\begin{proof}
This is immedaite from (\ref{dltasgamma}) and the basic formula of
Batalin-Vilkovisky geometry, expressing the bracket via the action of $%
\Delta $.
\end{proof}

\begin{proposition}
The equivariant differential $d_{DR}+i_{\gamma }$ of the action of $G_{0}$
on $\Omega _{DR}((g\otimes \Pi V)_{0})$ corresponds to the differential on $%
\mathcal{O}(g\otimes \Pi V)$:%
\begin{equation*}
\Delta +S_{2,\gamma }
\end{equation*}
\end{proposition}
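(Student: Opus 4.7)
The plan is to use the odd Fourier isomorphism already established in the two preceding propositions, which identifies $(S(g\otimes V),\Delta)$ with $(\Omega_{DR}^{\ast}((g\otimes\Pi V)_0),d_{DR})$ via (\ref{fourodd1}) in the $gl(N|N)$ case and (\ref{fourodd2}) in the $q(N)$ case. Under this isomorphism $\Delta$ already corresponds to $d_{DR}$, so the claim reduces to checking that multiplication by the quadratic Hamiltonian $S_{2,\gamma}$ on the BV side is intertwined with the contraction operator $i_{ad(\gamma)}$ on the de Rham side.

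To verify this, I would unpack the Fourier transform on linear generators. By (\ref{fourodd1})/(\ref{fourodd2}), an odd generator of $S(g\otimes V)$, viewed as an odd linear function on $g\otimes\Pi V$, acts on a form of the shape $(\prod i(\partial/\partial X^{\cdot}_{\cdot,\cdot}))\Omega$ by prepending one more contraction by the dual constant vector field; an even generator is a coordinate function on $(g\otimes\Pi V)_0$ and acts on the coefficients of the form by ordinary multiplication. Consequently, multiplication on the BV side by a monomial (even coordinate)$\cdot$(odd coordinate) is intertwined with (coordinate)$\cdot$(contraction by the dual constant vector field) on the de Rham side, i.e.\ with contraction by a linear vector field whose coefficients are the indicated coordinates. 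Expanding $S_{2,\gamma}=\beta([\gamma,X],X)$ in the matrix generators, one gets exactly such a sum whose coefficients encode $ad(\gamma)$ — this is precisely what it means for $S_{2,\gamma}$ to be the Hamiltonian of $ad(\gamma)$, as in the first part of the preceding proposition. Hence multiplication by $S_{2,\gamma}$ corresponds to $i_{ad(\gamma)}$, and combined with $\Delta\leftrightarrow d_{DR}$ this yields the claim.

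The main obstacle is the careful tracking of Koszul signs across the Fourier transform, since $S_{2,\gamma}$ mixes generators from both $g_0$ and $g_1$ (equivalently, contracts both even and odd tangent directions on $(g\otimes\Pi V)_0$). A convenient consistency check, once the correspondence on generators is pinned down, is that the graded commutators match on both sides: on the de Rham side, the standard Cartan formula $[d_{DR},i_{ad(\gamma)}]=L_{ad(\gamma)}$; on the BV side, the Cartan-type identity $[\Delta,S_{2,\gamma}]=L_{ad(\gamma)}$ proved in the immediately preceding proposition. Since $\Delta$ and $d_{DR}$ are already identified and the Lie derivative $L_{ad(\gamma)}$ is intertwined on both sides, this pins down the identification $S_{2,\gamma}\leftrightarrow i_{ad(\gamma)}$ up to operators commuting with $d_{DR}$, an ambiguity which is ruled out by the direct generator-level computation.
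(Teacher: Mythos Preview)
The paper does not provide a separate proof of this proposition: it is stated without argument, as an immediate consequence of the Fourier transform identifications (\ref{fourodd1}),(\ref{fourodd2}) already in place and of the two propositions just before it. Your proposal is correct and in the same spirit, only more explicit: you spell out why multiplication by the quadratic Hamiltonian $S_{2,\gamma}$ is intertwined with $i_{ad(\gamma)}$ under the odd Fourier transform, whereas the paper treats this as part of the standard dictionary (odd linear coordinate $\leftrightarrow$ contraction by the dual constant vector field, hence an even$\times$odd quadratic monomial $\leftrightarrow$ contraction by a linear vector field). Your consistency check via the Cartan identity $[\Delta,S_{2,\gamma}]=L_{ad(\gamma)}$ is exactly the content of the preceding proposition and is a good sanity check, though not strictly needed once the generator-level correspondence is verified.
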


In the next proposition I show that promoting the standard action of $g_{0}$
on $\Omega _{DR}((g\otimes \Pi V)_{0})$ to the action of the super Lie
algebra $g$ gives natural construction of equivariantly closed differential
forms for the action of one-parameter subgroups of $G_{0}$ generated by
elements of the form $[\gamma ,\gamma ]$, $\gamma \in g_{1}$.

\begin{proposition}
For any $G-$ invariant function $\Psi \in \mathcal{O}(g\otimes \Pi V)^{G}$
the formula 
\begin{equation*}
\gamma \in g\rightarrow \exp (S_{2,\gamma })\Psi
\end{equation*}%
defines a $G-$ invariant element from $(\mathcal{O}(g)\otimes \mathcal{O}%
(g\otimes \Pi V))^{G}$, which corresponds under the odd Fourier transform to
the equivariant differential form from $(\mathcal{O}(g)\otimes \Omega
_{DR}((g\otimes \Pi V)_{0}))^{G}$. If $\Psi $ is $\Delta -$closed, then this
element is closed under the equivariant differential 
\begin{equation*}
(\Delta +S_{2,\zeta })\left[ \exp (S_{2,\gamma })\Psi \right] =0
\end{equation*}%
where $\zeta =\frac{1}{2}[\gamma ,\gamma ]$
\end{proposition}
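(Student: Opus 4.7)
The plan is to prove the three assertions in turn: $G$-invariance, correspondence with a Cartan-model equivariant differential form under the odd Fourier transform, and the equivariant closedness equation. The first two are essentially formal; the real content is in the third.

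For invariance, the map $\gamma \mapsto S_{2,\gamma} = \beta([\gamma,X],X)$ is manifestly $G$-equivariant, being built from the $G$-invariant odd symplectic form $\beta$ and the adjoint actions of $g$ on itself and on $g\otimes\Pi V$. Hence $\exp(S_{2,\gamma})$ is $G$-invariant in $\mathcal{O}(g)\otimes\mathcal{O}(g\otimes\Pi V)$ under the diagonal action, and multiplication by $\Psi\in\mathcal{O}(g\otimes\Pi V)^{G}$ preserves invariance. For the Fourier-transform statement, the preceding proposition identifies $\Delta$ with $d_{DR}$ on $(g\otimes\Pi V)_{0}$; and since $S_{2,\gamma}$ is the Hamiltonian of the vector field $ad(\gamma)$, the odd Fourier transform (\ref{fourodd1})/(\ref{fourodd2}) carries multiplication by this quadratic Hamiltonian to the contraction $i_{ad(\gamma)}$. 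Thus $\Delta+S_{2,\zeta}$ corresponds to $d_{DR}+i_{ad(\zeta)}$, the Cartan-model equivariant differential.

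For the closedness equation, I would compute $\Delta[\exp(S_{2,\gamma})\Psi]$ using the standard BV identity
$$\Delta(fg)=(\Delta f)\,g+(-1)^{\bar f}f\,\Delta(g)+\{f,g\}.$$
With $f=\exp(S_{2,\gamma})$ and $g=\Psi$, the middle term vanishes by $\Delta\Psi=0$. Iterating the identity and using the second-order property of $\Delta$ gives, for even $S$, the master-equation-type formula
$$\Delta(\exp S)=\exp(S)\Bigl(\Delta S+\tfrac{1}{2}\{S,S\}\Bigr),$$
so with $S=S_{2,\gamma}$ and the identity $\Delta S_{2,\gamma}=0$ from the preceding proposition, the first term reduces to $\tfrac{1}{2}\exp(S_{2,\gamma})\{S_{2,\gamma},S_{2,\gamma}\}\,\Psi$. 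The general symplectic fact that the Poisson bracket of Hamiltonians is the Hamiltonian of the bracket of vector fields gives $\{S_{2,\gamma},S_{2,\gamma'}\}=S_{2,[\gamma,\gamma']}$ up to conventional sign, so applied with $\gamma'=\gamma$ this becomes $\pm\exp(S_{2,\gamma})S_{2,\zeta}\,\Psi$ with $\zeta=\tfrac{1}{2}[\gamma,\gamma]$. The remaining cross term $\{\exp(S_{2,\gamma}),\Psi\}=\exp(S_{2,\gamma})\{S_{2,\gamma},\Psi\}$ equals, by the Cartan homotopy $L_{ad(\gamma)}=[\Delta,S_{2,\gamma}]$ of the preceding proposition together with $\Delta S_{2,\gamma}=\Delta\Psi=0$, the expression $\exp(S_{2,\gamma})\,L_{ad(\gamma)}\Psi$, which vanishes by $G$-invariance of $\Psi$. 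Collecting terms yields the claim.

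The main obstacle is sign bookkeeping. The odd parity of $\Delta$, the parity-$1$ odd Poisson bracket, the Koszul signs arising from commuting $S_{2,\zeta}$ past $\exp(S_{2,\gamma})$, and the sign in the super analogue of the identity $\{S_{2,\gamma},S_{2,\gamma'}\}=S_{2,[\gamma,\gamma']}$ must conspire so that the contribution of $\tfrac{1}{2}\{S_{2,\gamma},S_{2,\gamma}\}$ cancels against the explicit $S_{2,\zeta}$ summand rather than doubling it. Verifying this compatibility in a basis, in the spirit of the Koszul sign verifications already performed in the proof of Theorem \ref{deltamx}, is the only nonformal step of the argument.
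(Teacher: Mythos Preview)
Your proof is correct and follows essentially the same route as the paper. The paper compresses the whole computation into the single operator identity
\[
\Delta \circ \exp (S_{2,\gamma }) = \exp (S_{2,\gamma })\bigl(\Delta + [\Delta ,S_{2,\gamma }] + \tfrac{1}{2}S_{2,[\gamma ,\gamma ]}\bigr),
\]
which it calls the ``standard Cartan calculus formula'' and then applies to $\Psi$, whereas you unpack the same identity through the BV Leibniz rule $\Delta(fg)=(\Delta f)g\pm f(\Delta g)+\{f,g\}$ and the master-equation formula $\Delta(\exp S)=\exp(S)(\Delta S+\tfrac12\{S,S\})$. Both arguments use exactly the same three inputs: $\Delta S_{2,\gamma}=0$, the Cartan homotopy $[\Delta,S_{2,\gamma}]=L_{ad(\gamma)}$ (equivalently $\{S_{2,\gamma},\cdot\}=L_{ad(\gamma)}$), and the moment-map identity $\{S_{2,\gamma},S_{2,\gamma}\}=S_{2,[\gamma,\gamma]}$; the paper does not verify the signs either, so your caveat about Koszul bookkeeping matches the level of rigor in the original.
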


\begin{proof}
By proposition \ref{Ladgamma} for any $G-$ invariant function 
\begin{equation*}
\lbrack \Delta ,S_{2,\gamma }]\Psi =0.
\end{equation*}%
Now the proof follows from the standard Cartan calculus formula:%
\begin{equation*}
\Delta \left[ \exp (S_{2,\gamma })\right] =\exp (S_{2,\gamma })(\Delta
+[\Delta ,S_{2,\gamma }]+\frac{1}{2}S_{2,[\gamma ,\gamma ]})
\end{equation*}
\end{proof}

\begin{corollary}
The Lagrangians of the matrix integrals 
\begin{equation}
\int_{L}\exp \frac{1}{\hbar }\left( -\frac{1}{2}\left\langle [\Xi
,X],X\right\rangle +S_{q}(X)\right) dX  \label{lagr1}
\end{equation}%
and 
\begin{equation}
\int_{L}\exp \frac{1}{\hbar }\left( -\frac{1}{2}\left\langle [\Xi
,X],X\right\rangle +S_{gl}(X)\right) dX  \label{lagr2}
\end{equation}%
constructed in \cite{B2}, represent, after the odd Fourier transforms (\ref%
{fourodd1},\ref{fourodd2}), equivariant differential forms from $\Omega
_{DR}((g\otimes \Pi V)_{0})$. These forms are closed under the equivariant
differential $(\Delta +S_{2,\Xi ^{2}})=d_{DR}+i_{[\Xi ^{2},\cdot ]}$
\end{corollary}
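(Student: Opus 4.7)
The corollary follows almost directly from the preceding proposition, once I identify $\Psi:=\exp(S_{q}(X)/\hbar)$ (respectively $\Psi:=\exp(S_{gl}(X)/\hbar)$) as a $G$-invariant $\Delta$-closed function on $g\otimes\Pi V$, and recognize the quadratic part of the exponent as the hamiltonian $S_{2,\Xi}$ for the adjoint action of $\Xi\in g_{1}$ (up to an $\hbar$-rescaling). The plan is therefore to verify these two identifications and then quote the preceding proposition.

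For the $\Delta$-closedness of $\Psi$: by Proposition~\ref{propFn} and its even-scalar-product analog, $S_{q}$ and $S_{gl}$ are by definition the $\mu_{F}$-images of formal series in $F(V)[[\hbar]]$ that, by hypothesis, solve the noncommutative BV quantum master equation $\hbar\Delta S+\tfrac{1}{2}\{S,S\}=0$. Theorem~\ref{deltamx} and its even-scalar-product counterpart say that $\mu_{F}$ intertwines the noncommutative BV operator on $F$ with the matrix BV operator $\Delta$ on $G$-invariants, so after applying $\mu_{F}$ the master equation becomes $\hbar\Delta\mu_{F}(S)+\tfrac{1}{2}\{\mu_{F}(S),\mu_{F}(S)\}=0$, which is equivalent to $\Delta\exp(\mu_{F}(S)/\hbar)=0$. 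The $G$-invariance of $\Psi$ is immediate since $\mu_{F}$ lands in the $G$-invariant subspace. For the quadratic term: the odd symplectic form $\beta$ on $g\otimes\Pi V$ is, by construction, the pairing induced from $(otr,l)$ in the even scalar product case and from $(tr,l)$ in the odd case, so the standard hamiltonian $S_{2,\gamma}=\beta([\gamma,X],X)$ agrees, up to the convention used in \cite{B2}, with $otr\circ l^{-1}([\Xi,X],X)$ (respectively $tr\circ l^{-1}([\Xi,X],X)$) when $\gamma=\Xi$.

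With these identifications the lagrangian is, after absorbing the $1/\hbar$ by a formal rescaling of $\Xi$ (legitimate because $S_{2,\gamma}$ is linear in $\gamma$), exactly of the form $\exp(S_{2,\gamma})\Psi$ to which the preceding proposition applies, with $\gamma$ a rescaled $\Xi$ and $\zeta=\tfrac{1}{2}[\gamma,\gamma]$ a rescaled $\Xi^{2}$. The preceding proposition immediately supplies both assertions: that under the odd Fourier transforms (\ref{fourodd1}) and (\ref{fourodd2}) the expression represents an equivariant differential form in $(\mathcal{O}(g)\otimes\Omega_{DR}((g\otimes\Pi V)_{0}))^{G}$, and that it is closed under $\Delta+S_{2,\zeta}$, which corresponds on the form side to $d_{DR}+i_{[\Xi^{2},\cdot]}$ by the proposition identifying the equivariant differential on each side.

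The only substantive step beyond invoking the preceding proposition is tracking the $\hbar$-bookkeeping in the rescaling so that the equivariant parameter is correctly identified with $\Xi^{2}$; this is ultimately a choice of normalization and requires no new computation. The main conceptual input is that $\mu_{F}$ transports the noncommutative BV master equation to the $\Delta$-closedness of $\exp(S/\hbar)$, which is essentially the content of Theorem~\ref{deltamx} and its even-scalar-product version already proved in the paper.
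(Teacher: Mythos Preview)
Your proposal is correct and follows exactly the route the paper intends: the corollary is stated without proof because it is meant to be an immediate application of the preceding proposition with $\Psi=\exp(S_{q}/\hbar)$ (resp.\ $\exp(S_{gl}/\hbar)$) and $\gamma$ a suitable rescaling of $\Xi$, which is precisely what you do. Your explicit verification that $\Psi$ is $G$-invariant and $\Delta$-closed via Theorem~\ref{deltamx} (and its even-scalar-product analogue) together with the identification of the quadratic term with $S_{2,\Xi}$ is more detailed than anything the paper provides, but it is the natural unpacking of the word ``Corollary'' here; your remark that the $\hbar$-bookkeeping is a matter of normalization is also in keeping with the paper's own level of precision on this point.
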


\begin{example}
In the case of the matrix integral (\ref{lagr2}), with $\Xi =\left( 
\begin{array}{cc}
0 & Id \\ 
\Lambda _{01} & 0%
\end{array}%
\right) $, $\Lambda _{01}\in gl(N)$, this gives an equivariant differential
form from $\Omega _{DR}((g\otimes \Pi V)_{0})$ with respect to the
block-diagonal action$\left( 
\begin{array}{cc}
\Lambda _{01} & 0 \\ 
0 & \Lambda _{01}%
\end{array}%
\right) $ of $gl(N)\subset gl(N|N)$.
\end{example}

\begin{remark}
\bigskip The localization in equivariant cohomology reduces the integrals (%
\ref{lagr1},\ref{lagr2}) over $S^{1}-$invariant relative cycles from $%
H_{\ast }^{S^{1}}(L,\func{Re}\frac{1}{\hbar }(S_{2}+S)<-r)$, $r\rightarrow
+\infty $, where $L$ is the $[\Xi ^{2},\cdot ]-$invariant subspace of $%
(g\otimes \Pi V)_{0}$ and $S^{1}$is the compact subgroup of the group
generated by $[\Xi ^{2},\cdot ]-$action , to the integrals over $[\Xi
^{2},\cdot ]-$fixed points. The details of the computation will appear in 
\cite{B6}.
\end{remark}

\section{Noncommutative AKSZ formalism.\label{secNCaksz}}

The relation of nc-BV differential $\Delta $ with invariant integration with
respect to the supergroups $GQ(N)$ and $GL(N|N)$, gives an interpretation to
the Lagrangians from \cite{B2} as non-commutative \emph{super}-equivariant
analogues of the AKSZ $\sigma -$model. This and other non-commutative
analogues of some standard Lagrangians are studied in \cite{B3} and \cite{B6}%
.

I consider,  for definiteness, the even scalar product case, the case of the
odd scalar product is parallel. The initial step is to interpret the space $%
q(N)\otimes \Pi V$ as the space of morphisms%
\begin{equation*}
Free(\Pi V)^{dual}\rightarrow q(N)
\end{equation*}%
from the free associative algebra generated by $(\Pi V)^{dual}$. This space
can be interpreted as the functor of points of $Spec\left( Free(\Pi
V)^{dual}\right) $ over simple associative \emph{super} algebra $q(N)$. 
\begin{equation*}
Spec(q(N))\rightarrow Spec\left( Free(\Pi V)^{dual}\right) 
\end{equation*}%
Next, interpret the even scalar product on $V$ as an even symplectic 2-form
on $Spec\left( Free(\Pi V)^{dual}\right) $, and interpret the odd trace $otr$
on $q(N)$ as a kind of integral with respect to the odd volume element.
Therefore their tensor product, defining the odd symplectic structure on $%
q(N)\otimes \Pi V$, is the analogue of the odd symplectic structure on space
of maps $f:\Sigma \rightarrow X$:%
\begin{equation*}
(u,v)_{f}=\dint\limits_{\Sigma }\omega _{f(y)}(u(y),v(y))dY
\end{equation*}%
where $u,v\in T_{f}Maps(\Sigma ,X)$. Now notice that the supergroup $GQ(N)$
acts on the space of morphisms $Mor(Free(\Pi V)^{dual},q(N))$, preserving
the odd symplectic structure. The Hamiltonians of the corresponding vector
fields are the quadratic functions $S_{2,\gamma }$ (\ref{qham}). Solution $S$
to the non-commutative BV-equation gives $GQ(N)-$invariant function $\mu
_{F}(S)$ (denoted by $S_{q}$ in (\ref{lagr2})) on 
\begin{equation*}
Mor(Free(\Pi V)^{dual},q(N))
\end{equation*}%
which corresponds also to a hamiltonian vector field. Their sum $%
S_{q}+S_{2,\gamma }$ satisfies the \emph{equivariant} quantum master
equation 
\begin{equation*}
\hbar \Delta (S_{q}+S_{2,\gamma })+\frac{1}{2}[S_{q}+S_{2,\gamma
},S_{q}+S_{2,\gamma }]+S_{2,\frac{1}{2}[\gamma ,\gamma ]}=0
\end{equation*}%
Derivations of $Free(\Pi V)^{dual}$, preserving the even scalar product on $V
$, correspond to cyclic Hochschild cochains. They also act on $Mor(Free(\Pi
V)^{dual},q_{N})$, preserving the odd symplectic structure and commuting
with the the $GQ(N)$ supergroup action. Derivations preserving $S_{q}$
correspond to closed cyclic cochains. Their Hamiltonians can also be added
to the Lagrangian and the resulting integrals depend in addition on the
extra parameters given by the cyclic cohomology classes. The cyclic
cohomology classes can also be viewed as natural observables of the theory.
Such matrix integrals are studied further in \cite{B6}.

\end{document}